%% file: ex_article.tex
\documentclass[hidelinks,onefignum,onetabnum]{siamart251216} 
\usepackage{ulem}

\usepackage{amssymb,graphicx,mathrsfs,bm}
\usepackage{enumitem}
\usepackage{microtype}
\usepackage{pgfplots}
\usepackage{placeins}

\input{ex_shared}

\ifpdf
\hypersetup{
  pdftitle={Exact Truncation and Radial Rigidity in Time-Optimal Control},
  pdfauthor={C. Quan, G. Wang, L. Wang and Q. Yan }
}
\fi

\begin{document}

\date{}

\maketitle

\begin{abstract}
We consider minimum-time control for the linear system
\begin{equation*}
\dot{z}(t)=Az(t)+Bu(t),\qquad \|u\|_{L^\infty(0,\infty;\mathbb{R}^m)}\leq 1,
\end{equation*}
with $A\in\mathbb R^{n\times n}$ and $B\in\mathbb R^{n\times m}$.
While the individual point-target and ball-target problems are classical,
we study a different question: when are their optimal controls exactly
compatible, in the sense that, for every nonzero initial state $x$ and all
sufficiently small $\varepsilon>0$, the optimal control for the tolerance ball
$\overline{B}_\varepsilon(0)$ is precisely the restriction of the point-target optimal
control? We prove that this {\it{exact truncation}} property is equivalent
to the rigidity conditions
\begin{equation*}
  B B^\top=\beta I_n,\qquad A+A^\top=2aI_n,
  \qquad \beta>0,\;  a\leq 0,
\end{equation*}
and also to Euclidean radiality of the point-target minimum-time function.
Thus exact truncation holds precisely when the sublevel sets of the
point-target minimum-time function are Euclidean balls centered at the
origin, matching the geometry of the tolerance targets. Moreover, the local
property automatically extends to every $0<\varepsilon<|x|$, and the resulting
optimal times and point-target optimal feedback are both explicit.
\end{abstract}

\medskip

\noindent\textbf{2020 Mathematics Subject Classifications.}
34H05, 49K15, 93C15

\medskip

\noindent\textbf{Keywords.}
time optimal control, ordinary differential system, exact truncation

\section{Introduction}\mbox{}\par

\medskip

\noindent{\bf The problem.}
Minimum-time control for finite-dimensional linear systems is classical.
The question studied in this paper, however, is not a fixed-target
minimum-time problem. We ask when the optimal controls associated with
different terminal targets are exactly compatible with one another.

We consider the linear control system
\begin{equation}\label{wang1}
    \dot z(t)=Az(t)+Bu(t),\qquad z(0)=x,
\end{equation}
where $A\in\mathbb{R}^{n\times n}$, $B\in\mathbb{R}^{n\times m}$,  $n, m\in \mathbb{N}^+$, and the control $u$
belongs to
\begin{equation*}
    \mathcal U_{ad}:=
    \left\{
        u\in L^\infty(0,\infty;\mathbb{R}^m):
        \|u\|_{L^\infty(0,\infty;\mathbb{R}^m)}\le1
    \right\}.
\end{equation*}
Throughout the paper, for $x\in\mathbb{R}^n$ and $u\in\mathcal U_{ad}$, we write
$z(\cdot;x,u)$ for the solution to \eqref{wang1}. We impose the
spectral dissipativity condition
\begin{equation*}\tag{H1}\label{wang2}
    \operatorname{Re}\lambda\le0,
    \qquad \lambda\in\sigma(A),
\end{equation*}
and the Kalman rank condition
\begin{equation*}\tag{H2}\label{wang3}
    \operatorname{rank}(B,AB,\ldots,A^{n-1}B)=n.
\end{equation*}
For a vector in any finite-dimensional Euclidean space, $|\cdot|$ denotes
the Euclidean norm. $I_n\in \mathbb{R}^{n\times n}$ is the identity matrix. For $\varepsilon\geq 0$, set
$\overline{B}_\varepsilon(0):=\{y\in\mathbb{R}^n:|y|\leq\varepsilon\}$, so that
$\overline{B}_0(0)=\{0\}$. For each $x\in\mathbb{R}^n\setminus\{0\}$ and
$0\leq\varepsilon<|x|$, let $(P_{\varepsilon,x})$ denote the minimum-time problem with
target $\overline{B}_\varepsilon(0)$, and define its optimal time by
\begin{equation*}
T_\varepsilon^*(x)
:=
\inf\left\{
T>0:\ z(T;x,u)\in \overline{B}_\varepsilon(0)
\text{ for some } u\in\mathcal U_{ad}
\right\}.
\end{equation*}
A control $u\in\mathcal U_{ad}$ is admissible for $(P_{\varepsilon,x})$ if
$z(T;x,u)\in \overline{B}_\varepsilon(0)$ for some $T>0$, and it is optimal if
$z(T_\varepsilon^*(x);x,u)\in \overline{B}_\varepsilon(0)$. Thus $(P_{\varepsilon,x})$ is
the ball-target problem when $\varepsilon>0$, and $(P_{0,x})$ is precisely the
point-target problem of steering the state to the origin.

The assumptions (H1)--(H2) are precisely the necessary and
sufficient conditions for $(P_{0,x})$ to admit an optimal control for
every $x\in\mathbb{R}^n\setminus\{0\}$; see, for instance,
\cite{Sontag1998,WangWangXuZhang}. Thus they are the natural standing
assumptions for the family of minimum-time problems considered here. The
basic existence, uniqueness, regularity, Pontryagin maximum principle, and
optimal-time convergence facts used below are collected in
Proposition~\ref{prop:basic-min-time}.

For $0\leq\varepsilon<|x|$, let $u_{\varepsilon,x}^*$ denote the optimal control of
$(P_{\varepsilon,x})$, and set
$z_{\varepsilon,x}^*(t):=z(t;x,u_{\varepsilon,x}^*)$ for
$0\leq t\leq T_\varepsilon^*(x)$. {\it{Here and throughout the paper, we use the piecewise continuous and
left-continuous representative specified in
Proposition~\ref{prop:basic-min-time}}} and extend $u_{\varepsilon,x}^*$ by zero to
$(T_\varepsilon^*(x),\infty)$. We introduce the point-target minimum-time function
$V_0(x):=T_0^*(x)$ for $x\ne0$, with $V_0(0):=0$.

Our central question is whether replacing the point target by a small
tolerance ball changes only the stopping time, while leaving the optimal
control unchanged before that time. More precisely, we say that $(A,B)$
has the {\it{exact truncation property}} if, for every $x\not=0$, there
exists $\varepsilon_x\in(0,|x|)$ such that
\begin{equation}\label{wang4}
u_{\varepsilon,x}^*(t)=u_{0,x}^*(t),
\qquad
0<t\le T_\varepsilon^*(x),
\qquad
0<\varepsilon<\varepsilon_x.
\end{equation}

\medskip
\noindent{\bf Motivation and significance.}
The point-target problem $(P_{0,x})$ is classical. In applications,
however, a terminal state is measured, estimated, or realized only with
finite accuracy. Replacing the exact target $\{0\}$ by a tolerance ball
$\overline{B}_\varepsilon(0)$ is therefore a natural relaxation. A first question is
whether the optimal  controls for $(P_{\varepsilon,x})$ converge to
those of $(P_{0,x})$ as $\varepsilon\downarrow 0$.
 The property \eqref{wang4} asks for substantially
more: the two optimal controls must agree exactly on their common time
interval. Thus the terminal tolerance may change the stopping time, but not
the control applied before that time.

Accordingly, our problem is not a stability question for a single
fixed-target minimum-time problem. It is a cross-target compatibility
question: we ask whether the point-target optimal synthesis also solves the
nearby ball-target problems up to their respective stopping times.
Classical minimum-time theory, by contrast, fixes a terminal target and
studies the corresponding reachable sets, value function, switching
structure, or optimal synthesis.

At first sight, such compatibility may appear almost automatic. In the
scalar case $n=m=1$, exact truncation indeed holds under
(H1)--(H2): replacing the origin by a small tolerance
interval merely stops the same time-optimal process earlier. This
one-dimensional intuition is misleading in higher dimensions. A
trajectory that is optimal for reaching the origin necessarily crosses
$\partial B_\varepsilon(0)$, but another controlled trajectory may enter
$\overline{B}_\varepsilon(0)$ earlier at a different point. Hence point-target
optimality does not in general imply ball-target optimality.

\medskip
\noindent{\bf Main results.}
Our main theorem gives a complete answer.

\begin{theorem}[Characterizations of the exact truncation property]\label{wang5}
    Assume \textnormal{(H1)} and \textnormal{(H2)}. Then the following statements are
equivalent.
\begin{enumerate}
  \item[\textnormal{(i)}] $(A,B)$ has the exact truncation property, namely,
\eqref{wang4} holds.
  \item [\textnormal{(ii)}] The system satisfies the algebraic rigidity condition: there exist
constants $\beta>0$ and $a\leq 0$ such that
\begin{equation}\label{wang6}
BB^\top=\beta I_n,
\qquad
A+A^\top=2aI_n.
\end{equation}
  \item [\textnormal{(iii)}]  The point-target minimum-time function is Euclidean radial, that is,
there exists a function $\Psi:[0,\infty)\to[0,\infty)$ with $\Psi(0)=0$
such that
\begin{equation}\label{wang7}
V_0(x)=\Psi(|x|),
\qquad x\in\mathbb{R}^n.
\end{equation}
\end{enumerate}

Moreover, once any of these equivalent conditions holds, the equality in
\eqref{wang4} holds for every $x\not=0$ and every
$0<\varepsilon<|x|$.
\end{theorem}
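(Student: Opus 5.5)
I would prove the cycle (ii)$\Rightarrow$(iii)$\Rightarrow$(i)$\Rightarrow$(ii), and obtain the ``moreover'' clause along the way in the step (ii)$\Rightarrow$(i).

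\textbf{Step (ii)$\Rightarrow$(iii).} Write $A=aI_n+S$ with $S^\top=-S$. Then $e^{tA}=e^{at}e^{tS}$, and $e^{tS}$ is orthogonal. Since $BB^\top=\beta I_n$, the map $u\mapsto Bu$ sends the unit ball of $\mathbb{R}^m$ onto the Euclidean ball of radius $\sqrt\beta$, so the relevant support function is $|B^\top\xi|=\sqrt\beta|\xi|$. Set $w(t)=e^{-tS}z(t)$. Then $\dot w=aw+e^{-tS}Bu$. Because $e^{-tS}B(e^{-tS}B)^\top=\beta I_n$, the control set $\{e^{-tS}Bu:|u|\le1\}$ is again exactly the ball of radius $\sqrt\beta$. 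The problem therefore reduces to $\dot w=aw+v$ with $|v|\le\sqrt\beta$, and the target $\overline B_\varepsilon(0)$ is invariant under the change of variables since $|w|=|z|$. For this reduced problem, the radial control $v=-\sqrt\beta\,w/|w|$ gives $\frac{d}{dt}|w|=a|w|-\sqrt\beta$. Comparison shows this control is optimal for every target $\overline B_\varepsilon(0)$, $\varepsilon\ge0$, because any admissible $v$ satisfies $\frac{d}{dt}|w|\ge a|w|-\sqrt\beta$. This yields the explicit formula
\[
\Psi(r)=\int_0^r\frac{ds}{\sqrt\beta-as},
\]
so in particular (iii) holds. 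It also gives $T_\varepsilon^*(x)=\Psi(|x|)-\Psi(\varepsilon)$.

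\textbf{Step (ii)$\Rightarrow$(i), including the ``moreover'' clause.} The same radial trajectory is optimal for every $\varepsilon\in[0,|x|)$. By the uniqueness in Proposition~\ref{prop:basic-min-time}, $u^*_{\varepsilon,x}$ is the restriction of $u^*_{0,x}$ to $(0,T_\varepsilon^*(x)]$. In the original variables this control is $u=-B^\top z/(\sqrt\beta|z|)$.

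\textbf{Step (iii)$\Rightarrow$(i).} If $V_0=\Psi(|\cdot|)$, then $\Psi$ is strictly increasing, because the sublevel sets are nested reachable sets. A point of the $0$-optimal trajectory at time $T_0^*(x)-\Psi(\varepsilon)$ lies on $\partial B_\varepsilon$. Suppose some control reached $\overline B_\varepsilon$ earlier. Concatenating it with the optimal control from that entry point to $0$ would reach $0$ in time less than $T^*_0(x)$, since $V_0\le\Psi(\varepsilon)$ on $\overline B_\varepsilon$. This is a contradiction, and uniqueness then gives (i). So (iii)$\Rightarrow$(i) is cheap. Alternatively, one can prove (iii)$\Rightarrow$(ii) directly; this is the remaining hard direction anyway.

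\textbf{Step (i)$\Rightarrow$(ii): the main obstacle.} I would use the Pontryagin maximum principle from Proposition~\ref{prop:basic-min-time}.

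For the ball target, the transversality condition forces the terminal adjoint to be parallel to the terminal state. Hence the costate is
\[
\psi_\varepsilon(t)=e^{(T_\varepsilon-t)A^\top}z^*_{0,x}(T_\varepsilon),
\]
up to a positive factor, with $T_\varepsilon=T_\varepsilon^*(x)$. Exact truncation says the same control is generated by $\psi_\varepsilon$ for all small $\varepsilon$, and also by the point-target costate $\psi_0$. On the interval where $u^*$ is continuous, the maximum condition gives $u=\mathrm{sgn}(B^\top\psi)$ componentwise, or a normalized version depending on the norm.

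The plan is to compare the costates at a fixed time $t$ and let $\varepsilon$ vary. Differentiating the identity ``$\psi_\varepsilon(t)$ induces the same control'' in $\varepsilon$, equivalently in the terminal time $s=T_\varepsilon$, gives a relation between $\psi_\varepsilon(t)$ and the state along the trajectory. Moving the reference time, this says that the costate at time $s$ is proportional to $z(s)$ along every optimal trajectory. The adjoint equation $\dot\psi=-A^\top\psi$ then combines with $\dot z=Az+Bu$ and $\psi\parallel z$ to give $\frac{d}{dt}(z/|z|)=-A^\top z/|z|+(\text{scalar})\,z$ for all directions $z$ realized by trajectories.

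Using that every $x\neq0$ is an initial state, the direction field $Az+Bu(z)$ projected orthogonally to $z$ must equal $-A^\top z$ projected orthogonally. This step needs care with bang-bang controls whose components switch. Taking $z\mapsto-z$ symmetry together with (H2), I expect it to force $(A+A^\top)z\parallel z$ for all $z$, hence $A+A^\top=2aI_n$. It should also force $Bu(z)\parallel z$, which with $|u|\le1$ bang-bang and all directions reachable gives $BB^\top=\beta I_n$. Finally, $a\le0$ follows from (H1).

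The delicate points I anticipate are two: the projection argument when $u$ is only piecewise continuous, and justifying the differentiability in $\varepsilon$. For the latter I would avoid differentiation and instead use that $\psi_\varepsilon(t)$ for two values of $\varepsilon$ both support the same reachable set at $z^*(t)$, so both are normal to $\{V_0\le V_0(z^*(t))\}$. This gives radiality of the sublevel sets, which is (iii), and then I would derive (ii) from (iii) via the same adjoint computation.
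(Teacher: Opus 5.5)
Your arrows (ii)$\Rightarrow$(iii) and (ii)$\Rightarrow$(i), including the ``moreover'' clause, are correct. They are the paper's Lemma~\ref{wang91} written in the rotating frame $w=e^{-tS}z$.

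Your (iii)$\Rightarrow$(i) is also correct, and it is cheaper than the paper's (iii)$\Rightarrow$(ii), which goes through the support function of $\mathcal C(T)$ (Lemmas~\ref{wang111} and~\ref{wang114}). The strict monotonicity of $\Psi$ is most cleanly justified by $V_0(z_{0,x}^*(t))=T_0^*(x)-t$ along an optimal trajectory, which passes through every radius below $|x|$. With this cycle, however, the whole content of the theorem sits in (i)$\Rightarrow$(ii), and there the proposal has a genuine gap.

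\textbf{The failing step.} The step that fails is ``the costate at time $s$ is proportional to $z(s)$ along every optimal trajectory.'' Exact truncation gives, for each small $\varepsilon$, a \emph{different} adjoint solution $\psi_\varepsilon(t)=e^{A^\top(T_\varepsilon-t)}\eta_\varepsilon$ generating $u_{0,x}^*$ on $(0,T_\varepsilon)$.

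To compare $\frac{d}{ds}(z/|z|)$ with $\frac{d}{ds}(\psi/|\psi|)$, you need a single adjoint solution with $\psi(s)\parallel z(s)$ on an interval. That means all the $\psi_\varepsilon$ must be positive multiples of one another, so the control $-B^\top\psi/|B^\top\psi|$ would have to determine the direction of $\psi$. A priori it does not. At each time $B^\top\psi$ sees $\psi$ only modulo $\ker B^\top$, and distinct costates can generate the same control; for $m=1$ only the switching pattern is visible.

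Formally differentiating in $\varepsilon$ does not help. It only says that $B^\top\partial_\varepsilon\psi_\varepsilon(t)=c(t)\,B^\top\psi_\varepsilon(t)$ with a factor that may depend on $t$, which is vacuous for $m=1$. So it does not force $\partial_\varepsilon\psi_\varepsilon\parallel\psi_\varepsilon$.

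Without injectivity of $B^\top$, the only usable information is the control at the single time $T_\varepsilon$, namely $u_{0,x}^*(T_\varepsilon)=-B^\top\eta_\varepsilon/|B^\top\eta_\varepsilon|$. In the paper, ``costate $\parallel$ state'' (Step~3 of Lemma~\ref{wang69}, $\rho(\tau_\varepsilon)=-\eta(\tau_\varepsilon)$) is proved only after $BB^\top=\beta I_n$ is known, precisely because $B^\top$ is then injective. Your argument therefore uses half of (ii) to prove (ii).

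\textbf{The fallback does not repair this.} $\psi_\varepsilon$ generates the optimal control only up to $T_\varepsilon$. So at $z_{0,x}^*(t)$ it is, up to sign, a normal to the ball-target sublevel set $\mathcal C(T_\varepsilon-t)+e^{-A(T_\varepsilon-t)}\overline{B}_\varepsilon(0)$. It is not, a priori, a normal to $\{V_0\le T_0^*(x)-t\}=\mathcal C(T_0^*(x)-t)$, and these sets are not nested in general. Even granting a common normal, radiality needs the normal at each boundary point $z$ to be $z$ itself. You know this only at the points $z_{0,x}^*(T_\varepsilon)$, and only for the ball-target sets, since $\eta_\varepsilon$ constrains the control before $T_\varepsilon$, not after.

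\textbf{What is missing.} The missing ingredients are those of Lemmas~\ref{wang29}--\ref{wang69}:
\begin{enumerate}
\item Lemma~\ref{wang18} lets you prescribe the terminal adjoint direction $\xi$ of the point-target problem. The terminal control $v=B^\top\xi/|B^\top\xi|$ then ranges over all unit vectors of $\mathrm{Ran}\,B^\top$.
\item Exact truncation plus $\varepsilon\downarrow0$ gives $\eta_\varepsilon\to-Bv/|Bv|$. The terminal maximum condition then gives $v=B^\top Bv/|B^\top Bv|$, i.e.\ $B^\top B=\beta P_W$. This uses no injectivity.
\item A separate argument excludes $\ker B^\top\ne\{0\}$: replace $\xi$ by $\xi+\alpha w$ with $w\in\ker B^\top$, which leaves the limiting ball normal unchanged, and let $\alpha\to\pm\infty$.
\item Only then does the single-time comparison identify the two adjoint directions. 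A second-order expansion near the terminal time yields $P_p(A+A^\top)p=0$.
\end{enumerate}
Nothing in your proposal plays the role of the realization lemma, or of the step excluding $\ker B^\top\neq\{0\}$.
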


\begin{remark}\label{2026-9-6-0}
The identity $BB^\top=\beta I_n$ implies $\textnormal{rank}B=n$. Hence $m\geq n$.
\end{remark}

\begin{corollary}[Explicit optimal times]\label{wang8}
If one of \textnormal{(i)}--\textnormal{(iii)} in
Theorem~\ref{wang5} holds, then, for every $x\not=0$ and every
$0\leq\varepsilon<|x|$,
\begin{equation}\label{wang9}
T_\varepsilon^*(x)
=
\int_\varepsilon^{|x|}
\frac{d\xi}{\sqrt{\beta}-a\xi},
\end{equation}
where
\begin{equation}\label{wang10}
\beta=\frac{1}{n}\operatorname{tr}(BB^\top)>0,
\qquad
a=\frac{1}{n}\operatorname{tr}A\le0.
\end{equation}
\end{corollary}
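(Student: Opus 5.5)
By Theorem~\ref{wang5}, any of (i)--(iii) gives the rigidity condition \eqref{wang6}. So the plan is to assume \eqref{wang6} and compute $T_\varepsilon^*(x)$ directly, in three steps: identify the constants, find an admissible control whose time is the integral in \eqref{wang9}, and prove a matching lower bound for every admissible control.

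\textbf{The constants.} Taking traces in $BB^\top=\beta I_n$ and $A+A^\top=2aI_n$ gives $n\beta=\operatorname{tr}(BB^\top)$ and $2na=2\operatorname{tr}A$. This is exactly \eqref{wang10}, and the signs $\beta>0$, $a\le 0$ come from Theorem~\ref{wang5}(ii).

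\textbf{Upper bound.} I will use the radial feedback
\[
u(t)=-\frac{B^\top z(t)}{\sqrt{\beta}\,|z(t)|}
\]
for as long as $z(t)\neq 0$. Since $|B^\top y|^2=y^\top BB^\top y=\beta|y|^2$, we get $|u(t)|=1$, so $u\in\mathcal U_{ad}$. Along the trajectory,
\[
\tfrac12\tfrac{d}{dt}|z|^2=z^\top Az+z^\top Bu=a|z|^2-\sqrt{\beta}\,|z|,
\]
because $z^\top Az=\tfrac12 z^\top(A+A^\top)z=a|z|^2$. Hence $r(t)=|z(t)|$ solves $\dot r=ar-\sqrt{\beta}<0$ while $r>0$. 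Here $\sqrt\beta-a\xi\ge\sqrt\beta>0$, so the integrand in \eqref{wang9} is bounded. The radius therefore decreases strictly from $|x|$ to $\varepsilon$ in time exactly $\int_\varepsilon^{|x|}d\xi/(\sqrt\beta-a\xi)$. Local existence of this nonlinear feedback is clear since the vector field is smooth away from $0$. This gives $T_\varepsilon^*(x)\le$ the integral.

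\textbf{Lower bound.} Take any admissible $u$ and set $r=|z|$. At points where $r>0$,
\[
\dot r=\frac{z^\top Az+z^\top Bu}{|z|}=ar+\frac{(B^\top z)^\top u}{|z|}\ge ar-\sqrt{\beta},
\]
by Cauchy--Schwarz and $|B^\top z|=\sqrt\beta\,|z|$. The function $r$ is absolutely continuous on intervals where it stays positive. Let $\tau$ be the first time $r=\varepsilon$. For $\varepsilon>0$, on $[0,\tau]$ we have $r\ge\varepsilon>0$ and $\dot r/(\sqrt\beta-ar)\ge -1$. Integrating gives
\[
\tau\ \ge\ \int_\varepsilon^{|x|}\frac{d\xi}{\sqrt\beta-a\xi},
\]
via the substitution $G(r)=\int_\varepsilon^{r}d\xi/(\sqrt\beta-a\xi)$, for which $\tfrac{d}{dt}G(r(t))\ge -1$. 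The case $\varepsilon=0$ follows either by the same argument applied to $\varepsilon'\downarrow 0$ or by monotone limits, since reaching $0$ requires passing every sphere of radius $\varepsilon'$ first.

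\textbf{Main obstacle.} The only delicate point is justifying the differential inequality for $|z|$: the absolute continuity of $r$ away from zero, and the $\varepsilon=0$ case where $r$ may vanish. I would handle both by working with $r^2$, which is $C^1$ along trajectories, and restricting to $[0,\tau_{\varepsilon'}]$ before letting $\varepsilon'\to 0$.
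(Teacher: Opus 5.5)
Your proposal is correct and is essentially the paper's argument. The paper takes traces in \eqref{wang6} to get \eqref{wang10} and then invokes \eqref{wang100} from Lemma~\ref{wang91}. Step~1 of that lemma is exactly your lower bound via $\frac{d}{dt}\Phi(|z(t)|)\ge -1$. Step~2 attains it with the open-loop control $u_*(t)=-B^\top e^{St}x/(\sqrt{\beta}\,|x|)$, where $A=aI_n+S$. This generates the same trajectory $z_*(t)=r_*(t)e^{St}x/|x|$ as your radial feedback, and it spares the paper the existence and continuation argument for the nonlinear closed loop.

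One small correction to your last paragraph: for merely measurable $u$, $|z|^2$ is absolutely continuous (indeed Lipschitz) but not in general $C^1$. Absolute continuity is all your integration step needs.
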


\begin{corollary}[Optimal feedback and Bellman equation]
\label{wang11}
If one of \textnormal{(i)}--\textnormal{(iii)} in
Theorem~\ref{wang5} holds, then the following statements hold.
\begin{enumerate}

\item[\textnormal{(i)}] The value function $V_0$ belongs to
$C^{0,1}(\mathbb{R}^n)\cap C^\infty(\mathbb{R}^n\setminus\{0\})$ and satisfies the
classical minimum-time Bellman equation
\begin{equation}\label{wang12}
1+\min_{|u|\le1}
\langle\nabla V_0(x),Ax+Bu\rangle=0,
\qquad x\not=0.
\end{equation}

\item[\textnormal{(ii)}] The problem $(P_{0,x})$ admits the optimal state feedback
\begin{equation}\label{wang13}
K(x)=-\frac{B^\top x}{\sqrt{\beta}\,|x|},
\qquad x\ne0,
\end{equation}
where $\beta$ is given by \eqref{wang10}.

\item[\textnormal{(iii)}] For every $x\ne0$, $K(x)$ is the unique minimizer of the pointwise
minimization problem
\begin{equation}\label{wang14}
\min_{|v|\le1}
\langle\nabla V_0(x),Ax+Bv\rangle.
\end{equation}
In particular, for the optimal state $z_{0,x}^*$ of $(P_{0,x})$,
$u_{0,x}^*(t)=K(z_{0,x}^*(t))$ for a.e. $t\in(0,T_0^*(x))$.
\end{enumerate}
\end{corollary}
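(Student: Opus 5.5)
The plan is to make everything explicit, using Theorem~\ref{wang5} and Corollary~\ref{wang8}. Under any of (i)--(iii) we have the rigidity condition \eqref{wang6} and $V_0(x)=\Psi(|x|)$. By \eqref{wang9} with $\varepsilon=0$,
\[
\Psi(r)=\int_0^r\frac{d\xi}{\sqrt\beta-a\xi},
\]
where $\beta,a$ are as in \eqref{wang10}. Since $a\le 0$, the denominator satisfies $\sqrt\beta-a\xi\ge\sqrt\beta>0$ on $[0,\infty)$. Hence $\Psi\in C^\infty([0,\infty))$ and $0<\Psi'\le 1/\sqrt\beta$. Because $x\mapsto|x|$ is $1$-Lipschitz and smooth off the origin, $V_0$ is Lipschitz on $\mathbb R^n$ and $C^\infty$ on $\mathbb R^n\setminus\{0\}$, with
\[
\nabla V_0(x)=\Psi'(|x|)\,\frac{x}{|x|},\qquad x\ne0 .
\]

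\textbf{Bellman equation.} Two algebraic identities do the work. From $A+A^\top=2aI_n$ we get $\langle x,Ax\rangle=a|x|^2$. From $BB^\top=\beta I_n$ we get $|B^\top x|^2=\beta|x|^2$. Then for $|u|\le1$, by Cauchy--Schwarz,
\[
\langle\nabla V_0(x),Ax+Bu\rangle=\Psi'(|x|)\Bigl(a|x|+\frac{\langle B^\top x,u\rangle}{|x|}\Bigr).
\]
The minimum over $|u|\le1$ equals $\Psi'(|x|)\bigl(a|x|-\sqrt\beta\bigr)$. This is exactly $-1$ by the formula for $\Psi'$, which proves \eqref{wang12}.

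\textbf{Uniqueness of the minimizer.} In \eqref{wang14}, the map $v\mapsto\langle B^\top\nabla V_0(x),v\rangle$ is a linear functional with coefficient vector $\Psi'(|x|)B^\top x/|x|$. This vector is nonzero because $\Psi'>0$ and $|B^\top x|=\sqrt\beta|x|>0$ (cf. Remark~\ref{2026-9-6-0}). A nonzero linear functional has a unique minimizer on the closed Euclidean unit ball, namely minus its normalized coefficient vector. That minimizer is $-B^\top x/|B^\top x|=-B^\top x/(\sqrt\beta|x|)=K(x)$, and $|K(x)|=1$.

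\textbf{Feedback optimality.} The closed-loop equation is
\[
\dot z=Az+BK(z)=Az-\sqrt\beta\,z/|z|,
\]
whose right-hand side is locally Lipschitz on $\mathbb R^n\setminus\{0\}$. Along a solution,
\[
\frac{d}{dt}|z|=\frac{\langle z,Az\rangle}{|z|}-\sqrt\beta=a|z|-\sqrt\beta\le-\sqrt\beta .
\]
Separating variables, $|z(t)|$ decreases and reaches $0$ exactly at $t=\Psi(|x|)$. Near that time $z$ stays bounded, so the maximal solution extends continuously with $z(\Psi(|x|))=0$. The control $u(t)=K(z(t))$ is continuous with values in the closed unit ball, hence admissible. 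It steers $x$ to $0$ in time $\Psi(|x|)=T_0^*(x)$, so it is optimal for $(P_{0,x})$, which gives (ii).

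\textbf{Optimal control is the feedback.} For the final claim of (iii), I would avoid appealing to uniqueness and argue by dynamic programming. Let $z=z^*_{0,x}$ and $T=T_0^*(x)$. Optimality and the dynamic programming principle give $V_0(z(t))=T-t$ on $[0,T]$. On $[0,T)$ we have $z(t)\neq 0$, $V_0$ is $C^1$ there, and $z$ is absolutely continuous. Differentiating yields, for a.e. $t$,
\[
1+\langle\nabla V_0(z(t)),Az(t)+Bu^*_{0,x}(t)\rangle=0 .
\]
By \eqref{wang12}, $u^*_{0,x}(t)$ therefore attains the minimum in \eqref{wang14} at $z(t)$. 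By uniqueness of the minimizer, $u^*_{0,x}(t)=K(z(t))$ a.e.

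The only step needing care is the dynamic programming identity $V_0(z(t))=T-t$. It is standard: $\le$ holds by concatenation of controls, and $\ge$ holds by optimality of $T$. Alternatively one can invoke the uniqueness of optimal controls from Proposition~\ref{prop:basic-min-time} together with the feedback solution constructed above.
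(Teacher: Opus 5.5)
Your proposal is correct. The regularity of $V_0$, the Bellman equation \eqref{wang12}, and the uniqueness of the minimizer in \eqref{wang14} are handled exactly as in the paper, via $\langle x,Ax\rangle=a|x|^2$, $|B^\top x|=\sqrt{\beta}\,|x|$, and $B^\top\nabla V_0(x)\neq0$.

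You genuinely differ from the paper in (ii) and in the final claim of (iii). The paper reads both off from Steps~2--3 of Lemma~\ref{wang91}. There, the explicit control $u_*(t)=-B^\top e^{St}\theta_0/\sqrt{\beta}$, with trajectory $r_*(t)e^{St}\theta_0$, was already shown to be optimal. Uniqueness of optimal controls (Proposition~\ref{prop:basic-min-time}(i),(iii)) then identifies $u_*$ with $u_{0,x}^*$. Since $z_{0,x}^*(t)/|z_{0,x}^*(t)|=e^{St}\theta_0$, this gives $u_{0,x}^*(t)=K(z_{0,x}^*(t))$ for every $t\in(0,T_0^*(x))$.

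You instead integrate the closed-loop system $\dot z=Az-\sqrt{\beta}\,z/|z|$ through its scalar radial equation. You then get the a.e.\ identity by a verification argument: the dynamic programming identity $V_0(z_{0,x}^*(t))=T_0^*(x)-t$, the chain rule, \eqref{wang12}, and uniqueness of the pointwise minimizer.

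What each route buys:
\begin{itemize}
\item Yours is the classical HJB verification argument. It does not need the bang--bang uniqueness of optimal controls. It actually shows that every optimal control of $(P_{0,x})$ coincides a.e.\ with $K(z_{0,x}^*(\cdot))$, which reproves uniqueness in the rigid case.
\item The paper's is shorter, because the optimal synthesis was already built in Lemma~\ref{wang91}. It also gives the identity pointwise for the fixed left-continuous representative, not only almost everywhere.
\end{itemize}

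One minor wording point in your dynamic programming identity. The inequality ``$\le$'' comes from applying the tail of $u_{0,x}^*$ starting at $z_{0,x}^*(t)$. It is ``$\ge$'' that uses concatenation together with the optimality of $T_0^*(x)$.
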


\begin{remark}[Geometric meaning of Theorem \ref{wang5}]
{\it
Condition \textnormal{(ii)} expresses two isotropy properties. The
identity $BB^\top=\beta I_n$ means that the control acts with the same
strength in every direction of the state space. Geometrically, the image
under $B$ of the unit ball in the control space, which is generally an
ellipsoid, is here the Euclidean ball of radius $\sqrt{\beta}$ in the
state space.

Moreover, $A+A^\top=2aI_n$ means that the symmetric part of $A$ acts
identically in every direction, while its skew-symmetric part generates
only a rotation. Indeed, as shown in the proof, writing $A=aI_n+S$ with
$S^\top=-S$, the optimal trajectory of $(P_{0,x})$ has the form
\(z_{0,x}^*(t)=r_*(t)e^{St}x/|x|.\) 
Thus the optimal trajectory need not be a straight line: its radial
motion toward the origin may be accompanied by the rotation generated by
$S$.

Condition \textnormal{(iii)} says that the sublevel sets
$\{x\in\mathbb{R}^n: V_0(x)\leq T\}$ are Euclidean balls centered at the origin.
Thus exact truncation occurs precisely when these point-target
minimum-time sublevel sets have the same concentric Euclidean-ball geometry
as the tolerance targets $\overline{B}_\varepsilon(0)$.
}
\end{remark}

\begin{remark}[Perturbations of exact truncation]\label{wang127}
{\it
The rigidity condition \eqref{wang6}, and hence exact truncation,
is generally destroyed by small perturbations of $(A,B)$. This raises a
natural stability question. If $(A_\delta,B_\delta)\rightarrow (A,B)$ with
$(A,B)$ satisfying \eqref{wang6}, one may ask whether the failure
of exact truncation remains small, for instance whether
$\|u_{\varepsilon,x}^{\delta,*}-u_{0,x}^{\delta,*}\|_{L^\infty(0,T_{\varepsilon,\delta}^*(x);\mathbb R^m)}\rightarrow 0$
as $(\delta,\varepsilon)\rightarrow (0,0)$. A quantitative theory of this approximate
truncation phenomenon is left for future work.
}
\end{remark}

\medskip
\noindent{\bf Relation to previous work.}
Classical time-optimal control for linear systems has been studied from
several complementary viewpoints, including reachable-set geometry, the
Pontryagin maximum principle, bang--bang structure, optimal synthesis,
and the regularity and geometry of the minimum-time function; see, among
others,
\cite{LaSalle1960, HermesLaSalle1969, Hajek1971, Hermes1972,
Brunovsky1978, Bardi1989, EvansJames1989, Staicu1989, GozziLoreti1999,
ColomboNguyen2013}. The geometric relation between adjoint directions
and supporting points of reachable sets is part of this classical
theory; see, for example, \cite{HermesLaSalle1969, Hermes1972}. The
connection between fixed-time reachable sets and level sets of the
minimum-time function is also classical. Mart\'{\i}nez-Legaz
\cite{MartinezLegaz1987}, in particular, developed a systematic
level-set approach to minimal-time functions for linear control
processes. Our use of fixed-time controllable sets as sublevel sets of
$V_0$ belongs to this framework.

A different classical line of work concerns symmetries and
identification of linear systems through their reachable sets. Chukwu
\cite[Theorem~4]{Chukwu1974} characterized reachable-set symmetries
under the hypothesis that the control constraint has only countably many
extreme points; this hypothesis does not cover the Euclidean ball in
dimensions greater than one. Hautus and Olsder
\cite[Theorem~1]{HautusOlsder1973} studied a related identification
problem in which coincidence of the reachable sets of two linear systems,
under suitable hypotheses, determines the underlying system matrices up
to a restricted ambiguity.

\medskip
\noindent{\bf Novelty and contributions.}
Against this background, the contribution of the present paper has three
distinct aspects.

{\it{Problem-level novelty.}}
The point-target and tolerance-ball minimum-time problems associated with
\eqref{wang1} are classical, as are perturbative questions with respect
to the target (see, for instance, \cite{Bonifacius-Pieper, Nakagiri}). The new object here is the exact identity \eqref{wang4}:
for every nonzero initial state and all sufficiently small $\varepsilon$ the
ball-target optimal control is exactly the truncation of the point-target
one. Thus we study a local cross-target identity, rather than convergence
or stability as $\varepsilon\downarrow0$.

{\it{Result-level novelty.}}
The principal new result is the equivalence between the local exact
truncation property \eqref{wang4}, the algebraic rigidity
\eqref{wang6}, and the Euclidean radiality
\eqref{wang7}. To our knowledge, such a characterization has
not appeared previously. In particular, unlike the symmetry and
identification results above, the Euclidean structure is not imposed or
inferred from a comparison of reachable sets; it is forced by a local
identity between optimal controls for two different terminal targets. A
further new feature is the local-to-global phenomenon: exact truncation
assumed only for sufficiently small $\varepsilon$ automatically extends to
every $0<\varepsilon<|x|$. The explicit optimal-time formula
\eqref{wang9} and the feedback
\eqref{wang13} are additional consequences of the same
rigidity.

{\it{Method-level novelty.}}
The methodological contribution is a cross-target terminal-contact mechanism
for extracting algebraic structure from optimal controls. Lemma~\ref{wang18}
realizes arbitrary terminal adjoint directions for the point-target problem,
while exact truncation transfers the corresponding optimal control to a
shrinking ball target, whose terminal transversality fixes the adjoint
direction by the radial normal. Comparing these two terminal geometries in arbitrary directions yields the
two rigidity identities in \eqref{wang6} from the local cross-target property
\eqref{wang4}.

\section{A realization lemma}\mbox{}\par

\medskip

\noindent We first record the standard facts about the family
$(P_{\varepsilon,x})$ that will be used in the sequel.

\begin{proposition}[Basic facts for the minimum-time problems]
\label{prop:basic-min-time}
Assume \textnormal{(H1)} and \textnormal{(H2)}. Let $x\in\mathbb{R}^n\setminus\{0\}$ and
$0\leq\varepsilon<|x|$.
\begin{enumerate}
\item[\textnormal{(i)}]  The optimal time $T_\varepsilon^*(x)$ is finite and is attained by an
optimal control $u_{\varepsilon,x}^*\in\mathcal U_{ad}$, which is unique up to
almost-everywhere equality. If $\varepsilon>0$, then
$|z_{\varepsilon,x}^*(T_\varepsilon^*(x))|=\varepsilon$.

\item[\textnormal{(ii)}]  If $\varepsilon>0$ and
$\eta_{\varepsilon,x}:=
z_{\varepsilon,x}^*(T_\varepsilon^*(x))/\varepsilon$, 
then $|\eta_{\varepsilon,x}|=1$, and the terminal transversality condition and
the Pontryagin maximum principle give
\begin{equation*}
u_{\varepsilon,x}^*(t)
=
-\frac{B^\top e^{A^\top(T_\varepsilon^*(x)-t)}\eta_{\varepsilon,x}}
{\left|B^\top e^{A^\top(T_\varepsilon^*(x)-t)}\eta_{\varepsilon,x}\right|},\;\;\;\;t\in (0,T_\varepsilon^*(x)),
\end{equation*}
whenever the denominator is nonzero.

\item[\textnormal{(iii)}]  The optimal control has a uniquely determined representative on
$(0,T_\varepsilon^*(x)]$ that is piecewise continuous and left-continuous, with
only finitely many discontinuities. Throughout the paper, we still use
$u_{\varepsilon,x}^*$ to denote this representative.

\item[\textnormal{(iv)}]  For each fixed $x\not=0$, 
$T_\varepsilon^*(x)\longrightarrow T_0^*(x)$
 as $\varepsilon\downarrow 0$.
\end{enumerate}
\end{proposition}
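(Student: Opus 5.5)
The plan is to prove the four items in order, each reducing to a standard piece of linear time-optimal control theory under (H1)--(H2). The main work is in (i) and (iii); (ii) is a routine consequence of the maximum principle.

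\emph{Item (i): finiteness, existence, uniqueness, terminal contact.} Finiteness of $T_\varepsilon^*(x)$ comes from the point-target case. Under (H1)--(H2) every state is null-controllable with controls in $\mathcal U_{ad}$, the classical global null-controllability result cited from \cite{Sontag1998,WangWangXuZhang}. Hence $T_\varepsilon^*(x)\le T_0^*(x)<\infty$.

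Existence follows by taking a minimizing sequence $T_k\downarrow T_\varepsilon^*(x)$ with controls $u_k$. By weak-star compactness of $\mathcal U_{ad}$ in $L^\infty$, we pass to a limit $u_k\rightharpoonup u$. The map $u\mapsto z(T;x,u)$ is affine and weak-star continuous, jointly with $T$ through the variation-of-constants formula, and the target is closed. So $z(T_\varepsilon^*(x);x,u)\in\overline{B}_\varepsilon(0)$.

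The identity $|z^*_{\varepsilon,x}(T^*_\varepsilon(x))|=\varepsilon$ is shown by contradiction. If the endpoint were interior, continuity of the trajectory would put $z(t)$ in the ball for some slightly smaller $t$, contradicting minimality. Note that $|x|>\varepsilon$ ensures $T_\varepsilon^*(x)>0$.

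For uniqueness, write the reachable set as
\[
R_T=e^{AT}x+\Big\{\int_0^Te^{A(T-s)}Bu(s)\,ds:\ u\in\mathcal U_{ad}\Big\},
\]
which is convex and compact. At the optimal time, $R_T$ touches the ball without entering its interior. Hence there is a separating unit normal $\eta$ with $\langle \eta,y\rangle\ge\langle\eta,z^*(T)\rangle$ for $y\in R_T$. For $\varepsilon>0$ the normal is forced: $\eta=\eta_{\varepsilon,x}$ is the outward normal of the ball at the contact point. For $\varepsilon=0$, $0\in\partial R_T$ by minimality.

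Minimizing $\int_0^T\langle B^\top e^{A^\top(T-s)}\eta,u(s)\rangle\,ds$ over $|u(s)|\le1$ yields the pointwise formula of (ii) wherever $B^\top e^{A^\top(T-s)}\eta\ne0$. By (H2) this function is real-analytic and not identically zero, since $\eta\ne0$. It therefore vanishes only at finitely many points of $[0,T]$, so $u$ is determined almost everywhere once $\eta$ is fixed.

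\emph{Main obstacle: uniqueness when $\varepsilon=0$.} Here the normal cone at $0$ could a priori contain several directions. I would handle this by strict convexity. If $u_1,u_2$ are both optimal, then $(u_1+u_2)/2$ is also optimal, so it must satisfy the bang-bang formula for some normal $\eta$. This forces $|(u_1+u_2)/2|=1$ almost everywhere. Because the Euclidean ball is strictly convex, this gives $u_1=u_2$ almost everywhere.

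\emph{Item (iii): the piecewise continuous representative.} The pointwise formula is continuous away from the finitely many zeros of $s\mapsto B^\top e^{A^\top(T-s)}\eta$. Taking left limits at those zeros, where they exist, gives the representative. Only finitely many discontinuities occur because at each isolated zero, analyticity gives one-sided limits of the normalized vector: write the function as $(s-s_0)^k$ times a nonvanishing analytic function near $s_0$. Uniqueness of the representative holds because two left-continuous piecewise continuous functions that agree almost everywhere agree everywhere on $(0,T]$.

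\emph{Item (iv): convergence of optimal times.} Monotonicity gives $T_\varepsilon^*(x)\uparrow L\le T_0^*(x)$ as $\varepsilon\downarrow0$. Take $\varepsilon_k\to0$; by weak-star compactness the controls $u^*_{\varepsilon_k,x}$ have a weak-star limit $u$. Continuity of the endpoint map, jointly in time and control, gives $z(L;x,u)=0$. Hence $T_0^*(x)\le L$, and therefore $L=T_0^*(x)$.
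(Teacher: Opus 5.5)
Your proposal is correct and follows essentially the same route as the paper's sketch: existence by weak-star compactness, terminal contact by continuity, the maximum principle with transversality, bang-bang plus strict convexity of the control ball for uniqueness, analyticity of $s\mapsto B^\top e^{A^\top(T-s)}\eta$ for the representative, and monotonicity plus compactness for (iv); the differences are only that you prove, via separation of the convex set $R_T$, the facts the paper imports from the literature, and that you get the point-target representative from a supporting normal at $0$ rather than from a cited lemma. Your ``main obstacle'' is in fact milder than you suggest: if you fix any single $\eta\neq0$ with $\langle\eta,y\rangle\ge0$ for all $y\in R_T$, then every optimal control minimizes the same linear functional and hence equals $-B^\top e^{A^\top(T-\cdot)}\eta/|B^\top e^{A^\top(T-\cdot)}\eta|$ almost everywhere, which already gives uniqueness without the midpoint argument.
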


\begin{proof} We give a sketch proof of (i)-(iv) one by one.

Firstly, the existence of optimal controls follows from Corollary 3.2 and Theorem 3.11 in \cite{WangWangXuZhang} (or Theorem 4.6 in Chapter 10 of
\cite{Sontag1998}). The
bang-bang property of optimal control is implied by \cite[Theorem 6.1]{WangWangXuZhang} and then  the uniqueness of optimal control holds.
Since $x\notin \overline{B}_\varepsilon(0)$, continuity
of the optimal state implies
$|z_{\varepsilon,x}^*(T_\varepsilon^*(x))|=\varepsilon$ when $\varepsilon>0$.

Secondly, (ii) follows from Theorem 4.2 and Theorem 4.1 in \cite{WangWangXuZhang}.

Thirdly, for the ball target,  the function $B^\top e^{A^\top(T_\varepsilon^*(x)-\cdot)}\eta_{\varepsilon,x}$  is real analytic and, by (H2), is
not identically zero. Hence it has only finitely many zeros on the compact
optimal-time interval, and $u_{\varepsilon,x}^*(\cdot)$ has a left
limit at each zero. This gives the representative in (iii).
For the point-target problem, the same representative property is also
recorded in \cite[Lemma~5.3]{QinWangYu2021}.

Finally, the strict monotonicity of $\{T_\varepsilon^*(x)\}_{\varepsilon>0}$ and a standard contradiction argument lead to the convergence in (iv).
\end{proof}

Let $T>0$ and $\xi\in\mathbb{R}^n\setminus\{0\}$. We define
\begin{equation}\label{wang15}
q_{\xi,T}(t):=B^\top e^{A^\top(T-t)}\xi,
\qquad 0\le t\le T.
\end{equation}
By \eqref{wang3}, $q_{\xi,T}(\cdot)$ is not identically zero. Since $q_{\xi,T}(\cdot)$ is
analytic, it has only finitely many zeros in $[0,T]$, and
$q_{\xi,T}(s)/|q_{\xi,T}(s)|$ admits a left limit at each zero
$t\in(0,T]$. Define
\begin{equation}\label{wang16}
u_{\xi,T}(t):=
\begin{cases}
\dfrac{q_{\xi,T}(t)}{|q_{\xi,T}(t)|},
& 0<t\le T,\quad q_{\xi,T}(t)\not=0,\\[2mm]
\displaystyle\lim_{s\uparrow t}
\dfrac{q_{\xi,T}(s)}{|q_{\xi,T}(s)|},
& 0<t\le T,\quad q_{\xi,T}(t)=0,\\[2mm]
0,
& t>T.
\end{cases}
\end{equation}
Then $u_{\xi,T}$ is piecewise continuous and left-continuous, with finitely
many discontinuities. Define
\begin{equation}\label{wang17}
x_{\xi,T}:=-\int_0^T e^{-As}Bu_{\xi,T}(s)\,ds.
\end{equation}

The following realization lemma will be used in the proof of
\textnormal{(i)}$\Rightarrow$\textnormal{(ii)} in
Theorem~\ref{wang5}.

\begin{lemma}[Realization of a terminal adjoint direction]\label{wang18}
The initial state $x_{\xi,T}$ defined by \eqref{wang17} satisfies
\begin{equation}\label{wang19}
T_0^*(x_{\xi,T})=T, \qquad u_{0,x_{\xi,T}}^*(t)=u_{\xi,T}(t), \qquad 0<t\le T,
\end{equation}
where $u_{\xi,T}$ is given by \eqref{wang16}.
\end{lemma}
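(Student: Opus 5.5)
The plan is a direct duality (supporting-hyperplane) argument on the reachable/controllable sets. By construction \eqref{wang17}, the control $u_{\xi,T}$ steers $x_{\xi,T}$ to the origin at time $T$. Indeed, $z(T;x,u)=e^{AT}\bigl(x+\int_0^T e^{-As}Bu(s)\,ds\bigr)$, so $z(T;x_{\xi,T},u_{\xi,T})=0$. Hence $T_0^*(x_{\xi,T})\le T$, and it remains to prove that no admissible control steers $x_{\xi,T}$ to $0$ in a time $T'<T$. Uniqueness of the optimal control in Proposition~\ref{prop:basic-min-time}(i), together with the choice of left-continuous representatives in (iii), will then give $u_{0,x_{\xi,T}}^*=u_{\xi,T}$ on $(0,T]$. (Note $x_{\xi,T}\neq0$, which follows from the computation below since the quantity $J$ defined there is positive.)

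For the lower bound, pair the null-controllability identity with the direction $\eta:=e^{-A^\top T}\xi$, or more conveniently with $\xi$ after multiplying by $e^{AT}$. Suppose $v\in\mathcal U_{ad}$ steers $x_{\xi,T}$ to $0$ at time $T'\le T$. Then $x_{\xi,T}=-\int_0^{T'}e^{-As}Bv(s)\,ds$. Extending $v$ by zero on $(T',T]$, it also steers to $0$ at time $T$, since the state stays at $0$ once $v=0$. Apply $e^{AT}$ and take the inner product with $\xi$:
\begin{equation*}
\int_0^T\langle q_{\xi,T}(s),u_{\xi,T}(s)\rangle\,ds=\int_0^{T}\langle q_{\xi,T}(s),v(s)\rangle\,ds .
\end{equation*}
The left side equals $J:=\int_0^T|q_{\xi,T}(s)|\,ds>0$, because $q_{\xi,T}$ is analytic and not identically zero by (H2). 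The right side is at most $\int_0^{T'}|q_{\xi,T}(s)|\,ds$, since $|v|\le1$ and $v=0$ after $T'$. If $T'<T$, this is strictly less than $J$, because $|q_{\xi,T}|$ is positive on a subinterval of $(T',T)$ by analyticity. This contradiction gives $T_0^*(x_{\xi,T})=T$.

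For the identification of the control, take $T'=T$ in the equality above with $v=u^*_{0,x_{\xi,T}}$. Then $\int_0^T\bigl(|q_{\xi,T}|-\langle q_{\xi,T},v\rangle\bigr)\,ds=0$ with a nonnegative integrand, so $v=q_{\xi,T}/|q_{\xi,T}|$ a.e. on $(0,T)$, because the zeros of $q_{\xi,T}$ are finite. The two piecewise continuous, left-continuous representatives therefore agree at every point of $(0,T]$, including the switching points, where both are defined by left limits as in \eqref{wang16} and Proposition~\ref{prop:basic-min-time}(iii).

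The main subtlety is the strictness of the inequality when $T'<T$. It rests on $q_{\xi,T}$ not vanishing on any interval, which follows from the finiteness of its zero set noted after \eqref{wang15}. Hypothesis (H1) is not actually needed here, beyond guaranteeing existence via Proposition~\ref{prop:basic-min-time}.
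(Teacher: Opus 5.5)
Your proof is correct and follows essentially the paper's route: pairing the null-controllability identity with $\xi$ after multiplying by $e^{AT}$ is exactly the paper's separation of $x_{\xi,T}$ from $\mathcal C(t)$, $t<T$, by the support function in the direction $p=-e^{A^\top T}\xi$, with strictness coming from $\int_t^T|q_{\xi,T}|\,ds>0$. Your only deviation is the equality-case argument, which identifies any optimal control directly as $q_{\xi,T}/|q_{\xi,T}|$ a.e.; this makes that step self-contained, whereas the paper appeals to the uniqueness in Proposition~\ref{prop:basic-min-time}(i) and (iii).
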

\begin{proof}
We first prove the first equality in \eqref{wang19}.
By \eqref{wang16} and \eqref{wang17},
$u_{\xi,T}\in\mathcal U_{ad}$ and
\begin{equation}\label{wang20}
z(T;x_{\xi,T},u_{\xi,T})=0.
\end{equation}
Hence $u_{\xi,T}$ is admissible for $(P_{0,x_{\xi,T}})$.

For $t>0$, let
\begin{equation}\label{wang21}
\mathcal C(t):=
\left\{
-\int_0^t e^{-As}Bu(s)\,ds:
\|u\|_{L^\infty(0,t;\mathbb{R}^m)}\leq 1
\right\},
\end{equation}
which  is the set of initial states that can be steered to the
origin at time $t$. The support function  of $\mathcal C(t)$ is defined by
\begin{equation}\label{wang22}
h_t(p):=\sup_{y\in\mathcal C(t)}\langle p,y\rangle, \qquad p\in\mathbb{R}^n.
\end{equation}
By \eqref{wang21} and \eqref{wang22},
\begin{equation}\label{wang23}
h_t(p)=\int_0^t |B^\top e^{-A^\top s}p|\,ds, \qquad p\in\mathbb{R}^n,\;t>0.
\end{equation}

Put $p:=-e^{A^\top T}\xi$. Then
\begin{equation}\label{wang24}
-B^\top e^{-A^\top s}p=B^\top e^{A^\top(T-s)}\xi=q_{\xi,T}(s),
\qquad 0<s<T.
\end{equation}
By \eqref{wang16}, \eqref{wang17}, \eqref{wang23}, and \eqref{wang24}, we have
\begin{equation}\label{wang25}
\langle p,x_{\xi,T}\rangle=\int_0^T \langle q_{\xi,T}(s),u_{\xi,T}(s)\rangle\,ds
=\int_0^T |q_{\xi,T}(s)|\,ds=h_T(p).
\end{equation}
Since $q_{\xi,T}$ has only finitely many zeros in $[0,T]$,
\eqref{wang23} and
\eqref{wang24} give
\begin{equation}\label{wang26}
h_T(p)-h_t(p)=\int_t^T |q_{\xi,T}(s)|\,ds>0, \qquad 0<t<T.
\end{equation}
It follows from \eqref{wang25} and \eqref{wang26} that
$\langle p,x_{\xi,T}\rangle>h_t(p)$ for $0<t<T$. 
Hence, by \eqref{wang22},
\begin{equation}\label{wang27}
x_{\xi,T}\notin\mathcal C(t),\qquad 0<t<T.
\end{equation}
On the other hand, \eqref{wang20} and \eqref{wang21} yield
$x_{\xi,T}\in\mathcal C(T)$. Together with \eqref{wang27}, this gives
\begin{equation}\label{wang28}
T_0^*(x_{\xi,T})=T.
\end{equation}

By \eqref{wang20} and \eqref{wang28},
$u_{\xi,T}$ is an optimal control for $(P_{0,x_{\xi,T}})$. By
 Proposition~\ref{prop:basic-min-time}(i) and (iii),
the chosen optimal-control representative is unique, and hence
$u_{0,x_{\xi,T}}^*(t)=u_{\xi,T}(t)$ for every $t\in(0,T]$.
This, along with \eqref{wang28}, yields \eqref{wang19} and completes the proof.
\end{proof}

\section{Structural lemmas and proof of Theorem~\ref{wang5}}\mbox{}\par

\medskip

\noindent 

The proof of Theorem~\ref{wang5} is based on the following
lemmas. The first three lemmas prove that (i) implies
(ii). The fourth lemma proves that (ii)
implies both (i) and (iii). The last two
lemmas prove that (iii) implies (ii).

\begin{lemma}\label{wang29}
Assume that \eqref{wang4} holds. Set
$W:=\textnormal{Ran} B^\top\subset\mathbb{R}^m$, and let $P_W$ denote the orthogonal projection
from $\mathbb{R}^m$ onto $W$. Then there exists $\beta>0$ such that
\begin{equation}\label{wang30}
B^\top B=\beta P_W.
\end{equation}
\end{lemma}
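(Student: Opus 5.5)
The plan is to combine the realization Lemma~\ref{wang18} with the exact truncation identity \eqref{wang4} and the ball-target maximum principle in Proposition~\ref{prop:basic-min-time}(ii), and then let $\varepsilon\downarrow0$. The goal is to show that every nonzero $w\in W$ is an eigenvector of $B^\top B$ with a positive eigenvalue. That forces $B^\top B$ to act as a scalar $\beta$ on $W$. Since $\ker(B^\top B)=\ker B=W^\perp$, this yields \eqref{wang30}.

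\emph{Step 1 (realize a terminal direction).} Fix $T>0$ and $\xi\in\mathbb{R}^n$ with $B^\top\xi\neq0$; every nonzero $w\in W$ has the form $w=B^\top\xi$ for such a $\xi$. Put $x:=x_{\xi,T}$. Lemma~\ref{wang18} gives $T_0^*(x)=T$ and $u_{0,x}^*=u_{\xi,T}$ on $(0,T]$. Since $q_{\xi,T}(T)=B^\top\xi\neq0$, the control $u_{\xi,T}$ is continuous on a left neighbourhood of $T$ and tends to $B^\top\xi/|B^\top\xi|$ as $t\uparrow T$. By \eqref{wang4}, for small $\varepsilon>0$ we have $u_{\varepsilon,x}^*=u_{\xi,T}$ on $(0,T_\varepsilon^*(x)]$. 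Moreover $T_\varepsilon^*(x)<T$, and $T_\varepsilon^*(x)\to T$ by Proposition~\ref{prop:basic-min-time}(iv).

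\emph{Step 2 (asymptotics of the terminal normal).} Write $T_\varepsilon:=T_\varepsilon^*(x)$. Because the state driven by $u_{\xi,T}$ reaches $0$ at time $T$ by \eqref{wang20}, and the two controls coincide up to $T_\varepsilon$, we get
\begin{equation*}
z_{\varepsilon,x}^*(T_\varepsilon)=-\int_{T_\varepsilon}^{T}e^{A(T_\varepsilon-s)}Bu_{\xi,T}(s)\,ds
=-(T-T_\varepsilon)\Bigl(\frac{BB^\top\xi}{|B^\top\xi|}+o(1)\Bigr).
\end{equation*}
Here $BB^\top\xi\neq0$, because $\langle BB^\top\xi,\xi\rangle=|B^\top\xi|^2>0$. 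Dividing by $|z_{\varepsilon,x}^*(T_\varepsilon)|=\varepsilon$ gives
\begin{equation*}
\eta_{\varepsilon,x}\to-\frac{BB^\top\xi}{|BB^\top\xi|}\qquad\text{as }\varepsilon\downarrow0.
\end{equation*}

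\emph{Step 3 (compare the two formulas at $t=T_\varepsilon$).} Proposition~\ref{prop:basic-min-time}(ii) expresses $u_{\varepsilon,x}^*(t)$ through the adjoint direction $B^\top e^{A^\top(T_\varepsilon-t)}\eta_{\varepsilon,x}$. As $t\uparrow T_\varepsilon$ this vector tends to $B^\top\eta_{\varepsilon,x}$. For small $\varepsilon$, $B^\top\eta_{\varepsilon,x}$ is close to $-B^\top BB^\top\xi/|BB^\top\xi|$, which is nonzero: indeed $\langle B^\top BB^\top\xi,B^\top\xi\rangle=|BB^\top\xi|^2>0$. So the denominator in Proposition~\ref{prop:basic-min-time}(ii) does not vanish near $T_\varepsilon$. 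Using left-continuity of the chosen representative, we obtain
\begin{equation*}
\frac{q_{\xi,T}(T_\varepsilon)}{|q_{\xi,T}(T_\varepsilon)|}=u_{\xi,T}(T_\varepsilon)=u_{\varepsilon,x}^*(T_\varepsilon)=-\frac{B^\top\eta_{\varepsilon,x}}{|B^\top\eta_{\varepsilon,x}|}.
\end{equation*}
Letting $\varepsilon\downarrow0$ yields
\begin{equation*}
\frac{B^\top\xi}{|B^\top\xi|}=\frac{B^\top BB^\top\xi}{|B^\top BB^\top\xi|}.
\end{equation*}
Hence $B^\top B w=\lambda(w)w$ with $\lambda(w)>0$ for every nonzero $w=B^\top\xi\in W$.

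\emph{Step 4 (conclude).} The operator $B^\top B$ is symmetric and maps $W$ into $W$. If every nonzero vector of $W$ is an eigenvector, then $\lambda(w)$ must be constant on $W$: otherwise the sum of two eigenvectors with distinct eigenvalues would not be an eigenvector. Call this constant $\beta$; then $\beta>0$, since $W\neq\{0\}$ by (H2). Because $B^\top B$ vanishes on $W^\perp=\ker B$, we get $B^\top B=\beta P_W$.

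The main obstacle is the justification in Steps 2--3: we need the terminal normal $\eta_{\varepsilon,x}$ to be asymptotically $-BB^\top\xi/|BB^\top\xi|$, and the pointwise identification of the two controls exactly at $t=T_\varepsilon$. The key ingredients are $T_\varepsilon\to T$, continuity of $u_{\xi,T}$ near $T$, and the nonvanishing of $B^\top\eta_{\varepsilon,x}$, all of which must be checked with care.
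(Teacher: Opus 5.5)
Your proposal is correct and follows essentially the same route as the paper. You realize the terminal direction via Lemma~\ref{wang18}, use exact truncation to show the terminal normal is asymptotically $-BB^\top\xi/|BB^\top\xi|$ (the paper's $-Bv/|Bv|$ with $v=B^\top\xi/|B^\top\xi|$), compare the two control formulas at $t=T_\varepsilon$ via left-continuity and let $\varepsilon\downarrow0$, and then conclude with the constant-eigenvalue argument and $\ker B=W^\perp$; the only difference is cosmetic, since you do not normalize $B^\top\xi$ to a unit vector.
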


\begin{proof}
By \eqref{wang3}, $B\not=0$ and hence $W\not=\{0\}$. 
Fix an arbitrary unit vector $v\in W$ and $T>0$, and choose $\xi\in\mathbb{R}^n$
such that $B^\top\xi=v$.

\smallskip
\noindent{\it Step 1. We prove that
\begin{equation}\label{wang31}
T_0^*(x_{\xi,T})=T, \qquad \lim_{t\uparrow T}u_{0,x_{\xi,T}}^*(t)=v.
\end{equation}}

It follows from Lemma~\ref{wang18} that
\begin{equation}\label{wang32}
T_0^*(x_{\xi,T})=T,
\qquad
u_{0,x_{\xi,T}}^*(t)=u_{\xi,T}(t),
\qquad 0<t\le T.
\end{equation}
Since $B^\top\xi=v\not=0$, we have
$B^\top e^{A^\top(T-t)}\xi\not=0$
for $t<T$ sufficiently close to $T$. Hence, by \eqref{wang15},
\eqref{wang16}, and \eqref{wang32},
\begin{equation}\label{wang33}
u_{0,x_{\xi,T}}^*(t)=\frac{B^\top e^{A^\top(T-t)}\xi}
{|B^\top e^{A^\top(T-t)}\xi|}
\end{equation}
for $t<T$ sufficiently close to $T$. Letting $t\uparrow T$ in
\eqref{wang33}, we obtain
\begin{equation*}
\lim_{t\uparrow T}u_{0,x_{\xi,T}}^*(t)
=\frac{B^\top\xi}{|B^\top\xi|}=v.
\end{equation*}
Together with the first equality in \eqref{wang32}, this proves
\eqref{wang31}.

\smallskip
\noindent{\it Step 2. We prove that, if $x_\varepsilon$ is the terminal state of
$(P_{\varepsilon,x_{\xi,T}})$, then
\begin{equation}\label{wang34}
\eta_\varepsilon:=\frac{x_\varepsilon}{\varepsilon}
\longrightarrow
\eta:=-\frac{Bv}{|Bv|}
\qquad\text{as }\varepsilon\downarrow 0.
\end{equation}}

Choose $\varepsilon_0>0$ sufficiently small such that
\eqref{wang4} holds for $x_{\xi,T}$ whenever
$0<\varepsilon<\varepsilon_0$, and $\varepsilon_0<|x_{\xi,T}|$. For $0<\varepsilon<\varepsilon_0$, set
$
T_\varepsilon:=T_\varepsilon^*(x_{\xi,T})
$ and 
$
\tau_\varepsilon:=T-T_\varepsilon.
$
By \eqref{wang31}, $z_{0,x_{\xi,T}}^*(T)=0$. Since
$\varepsilon<|x_{\xi,T}|$, the continuity of $z_{0,x_{\xi,T}}^*$ shows that it
enters $\overline{B}_\varepsilon(0)$ at some time strictly smaller than $T$. Hence
\begin{equation}\label{wang35}
T_\varepsilon<T,
\qquad
\tau_\varepsilon>0.
\end{equation}
Moreover, by Proposition~\ref{prop:basic-min-time}(iv) and
\eqref{wang31},
$T_\varepsilon^*(x_{\xi,T})\to T$ as $\varepsilon\downarrow0$. Hence
\begin{equation}\label{wang36}
\tau_\varepsilon\longrightarrow 0
\qquad\text{as }\varepsilon\downarrow 0.
\end{equation}

Next, it follows from \eqref{wang4} that
\begin{equation*}
u_{\varepsilon,x_{\xi,T}}^*(t)
=u_{0,x_{\xi,T}}^*(t),
\qquad 0<t\leq T_\varepsilon.
\end{equation*}
Let $x_\varepsilon$ be the terminal state of $(P_{\varepsilon,x_{\xi,T}})$. By the
above equality, the uniqueness of solutions of \eqref{wang1}, and 
Proposition~\ref{prop:basic-min-time}(i),
\begin{equation}\label{wang37}
x_\varepsilon
=z_{\varepsilon,x_{\xi,T}}^*(T_\varepsilon)
=z_{0,x_{\xi,T}}^*(T_\varepsilon),
\qquad
|x_\varepsilon|=\varepsilon.
\end{equation}
Since $z_{0,x_{\xi,T}}^*(T)=0$, by \eqref{wang37} and the
variation-of-constants formula,
\begin{equation}\label{wang38}
x_\varepsilon
=-\int_0^{\tau_\varepsilon}e^{-Ar}B
u_{0,x_{\xi,T}}^*(T_\varepsilon+r)\,dr.
\end{equation}

Moreover, by \eqref{wang36} and \eqref{wang31},
\begin{equation}\label{wang39}
\sup_{0\leq r\leq\tau_\varepsilon}
\left|e^{-Ar}B u_{0,x_{\xi,T}}^*(T_\varepsilon+r)-Bv\right|
\longrightarrow0
\qquad\text{as }\varepsilon\downarrow 0.
\end{equation}
It follows from \eqref{wang38},
\eqref{wang39}, and \eqref{wang35} that
\begin{equation}\label{wang40}
\frac{x_\varepsilon}{\tau_\varepsilon}
\longrightarrow -Bv
\qquad\text{as }\varepsilon\downarrow 0.
\end{equation}
By \eqref{wang37} and \eqref{wang40},
\begin{equation}\label{wang41}
\frac{\varepsilon}{\tau_\varepsilon}
=\frac{|x_\varepsilon|}{\tau_\varepsilon}
\longrightarrow |Bv|
\qquad\text{as }\varepsilon\downarrow0.
\end{equation}
Since $v\in W=\mbox{Ran} B^\top=(\mbox{Ker} B)^\perp$ and $v\not=0$, we have $Bv\not=0$. Hence, by
\eqref{wang40} and \eqref{wang41}, we obtain
\eqref{wang34}.

\smallskip
\noindent{\it Step 3. We prove that}
\begin{equation}\label{wang42}
v
=
\frac{B^\top Bv}{|B^\top Bv|}.
\end{equation}


By \eqref{wang34}, we have
\begin{equation}\label{wang43}
B^\top\eta
=-\frac{B^\top Bv}{|Bv|}.
\end{equation}
Since $Bv\not=0$ and $\langle B^\top Bv,v\rangle=|Bv|^2>0$,
we obtain $B^\top Bv\not=0$. Hence, by \eqref{wang43},
$B^\top\eta\not=0$. Shrinking $\varepsilon_0$ if necessary,
\eqref{wang34} gives
\begin{equation*}
B^\top\eta_\varepsilon\not=0,
\qquad 0<\varepsilon<\varepsilon_0.
\end{equation*}

For $0<\varepsilon<\varepsilon_0$, the normalized terminal state in
 Proposition~\ref{prop:basic-min-time}(ii) is precisely
$\eta_\varepsilon$. Since $B^\top\eta_\varepsilon\not=0$, the formula in 
Proposition~\ref{prop:basic-min-time}(ii) and the left
continuity specified in (iii) give
\begin{equation}\label{wang44}
u_{\varepsilon,x_{\xi,T}}^*(T_\varepsilon)
=-\frac{B^\top\eta_\varepsilon}{|B^\top\eta_\varepsilon|},
\qquad 0<\varepsilon<\varepsilon_0.
\end{equation}
On the other hand, by \eqref{wang4}, 
$u_{0,x_{\xi,T}}^*(T_\varepsilon)
=u_{\varepsilon,x_{\xi,T}}^*(T_\varepsilon)$ 
for $0<\varepsilon<\varepsilon_0$. 
Together with \eqref{wang44}, this yields
\begin{equation}\label{wang45}
u_{0,x_{\xi,T}}^*(T_\varepsilon)
=-\frac{B^\top\eta_\varepsilon}{|B^\top\eta_\varepsilon|},
\qquad 0<\varepsilon<\varepsilon_0.
\end{equation}

Letting $\varepsilon\downarrow0$ in \eqref{wang45} and using
\eqref{wang36}, \eqref{wang31}, and
\eqref{wang34}, we obtain
\begin{equation*}
v=-\frac{B^\top\eta}{|B^\top\eta|}
=\frac{B^\top Bv}{|B^\top Bv|}.
\end{equation*}
This proves \eqref{wang42}.

\smallskip
\noindent{\it Step 4. We prove that there exists $\beta>0$ such that}
\begin{equation}\label{wang46}
B^\top Bw=\beta w
\qquad\mbox{for every }w\in W.
\end{equation}

Since the unit vector $v\in W$ was arbitrary,
\eqref{wang42} holds for every unit vector in $W$. Hence,
for each $w\in W\setminus\{0\}$, there exists $\lambda_w>0$ such that
\begin{equation}\label{wang47}
B^\top Bw=\lambda_w w.
\end{equation}

We now claim that $\lambda_w$ is independent of $w$. Let
$w_1,w_2\in W\setminus\{0\}$. If $w_1$ and $w_2$ are linearly dependent,
then \eqref{wang47} and the linearity of $B^\top B$ give
$\lambda_{w_1}=\lambda_{w_2}$. If they are linearly independent, then
\begin{equation}\label{wang48}
B^\top B(w_1+w_2)
=\lambda_{w_1+w_2}(w_1+w_2).
\end{equation}
On the other hand, by \eqref{wang47}, 
$B^\top B(w_1+w_2)
=\lambda_{w_1}w_1+\lambda_{w_2}w_2$. 
Comparing this with \eqref{wang48} and using the linear
independence of $w_1$ and $w_2$, we obtain $\lambda_{w_1}=\lambda_{w_2}$.
Hence, $\lambda_w$ is independent of $w$.

The above claim, along with \eqref{wang47}, implies \eqref{wang46} for some
  $\beta>0$.

\smallskip
\noindent{\it Step 5. We prove \eqref{wang30}.}

We have $W^\perp=(\mbox{Ran} B^\top)^\perp=\mbox{Ker} B$. 
Let $x\in\mathbb{R}^m$ be arbitrary and write
$x=P_Wx+(I-P_W)x$.
Since $P_Wx\in W$, it follows from \eqref{wang46} that
$B^\top B(P_Wx)=\beta P_W x$. 
Since $(I-P_W)x\in W^\perp=\mbox{Ker} B$, we have $B^\top B((I-P_W)x)=0$.
Therefore,
\begin{equation*}
B^\top Bx=B^\top B(P_Wx)+B^\top B((I-P_W)x)
=\beta P_Wx.
\end{equation*}
Since $x\in\mathbb{R}^m$ was arbitrary, \eqref{wang30} follows.
This completes the proof of the lemma.
\end{proof}

\begin{lemma}\label{wang49}
Assume that \eqref{wang4} holds, and let $\beta>0$ be the
constant in \eqref{wang30}. Then
\begin{equation}\label{wang50}
BB^\top=\beta I_n.
\end{equation}
\end{lemma}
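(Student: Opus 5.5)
The first step is pure algebra starting from Lemma~\ref{wang29}. Multiplying $B^\top B=\beta P_W$ on the left by $B$ gives $BB^\top B=\beta BP_W$. Since $(I-P_W)$ maps into $W^\perp=\operatorname{Ker}B$, we have $BP_W=B$, so
\begin{equation*}
(BB^\top-\beta I_n)B=0 .
\end{equation*}
Thus $BB^\top=\beta I_n$ on $\operatorname{Ran}B$. Moreover $BB^\top$ vanishes on $(\operatorname{Ran}B)^\perp=\operatorname{Ker}B^\top$. Hence $BB^\top=\beta P_{\operatorname{Ran}B}$, and \eqref{wang50} is equivalent to $\operatorname{Ker}B^\top=\{0\}$, i.e.\ $\operatorname{rank}B=n$. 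Everything therefore reduces to excluding a nonzero $\zeta\in\operatorname{Ker}B^\top$, and this must again use \eqref{wang4}, since (H1)--(H2) allow $\operatorname{rank}B<n$.

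To exclude such a $\zeta$, I would argue by contradiction. Assume $\zeta\neq0$ and $B^\top\zeta=0$, and apply Lemma~\ref{wang18} with $\xi=\zeta$ and some $T>0$. By (H2), some $B^\top(A^\top)^j\zeta$ is nonzero. Let $k\ge1$ be the first such index and put $w:=B^\top(A^\top)^k\zeta\in W\setminus\{0\}$. Then
\begin{equation*}
q_{\zeta,T}(t)=\frac{(T-t)^k}{k!}\,w+O\big((T-t)^{k+1}\big),
\end{equation*}
so $u^*_{0,x_{\zeta,T}}(t)\to v:=w/|w|$ as $t\uparrow T$. Steps~1--3 of the proof of Lemma~\ref{wang29} then go through verbatim with this $v$: $\eta_\varepsilon\to-Bv/|Bv|$, $T_\varepsilon\uparrow T$, and the controls agree on $(0,T_\varepsilon]$. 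By Proposition~\ref{prop:basic-min-time}(ii), that agreement means the two analytic functions
\begin{equation*}
f_\varepsilon(s):=B^\top e^{A^\top s}\big(e^{A^\top\tau_\varepsilon}\zeta\big),
\qquad
g_\varepsilon(s):=-B^\top e^{A^\top s}\eta_\varepsilon
\end{equation*}
are positively parallel for $s=T_\varepsilon-t\in[0,T_\varepsilon)$.

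The key step, and the expected main obstacle, is to show that this parallelism forces the adjoint vectors themselves to be parallel, i.e.\ $e^{A^\top\tau_\varepsilon}\zeta=c_\varepsilon\eta_\varepsilon$ with $c_\varepsilon>0$. Once that holds, we get $\eta_\varepsilon=e^{A^\top\tau_\varepsilon}\zeta/|e^{A^\top\tau_\varepsilon}\zeta|\to\zeta/|\zeta|$, while Step~2 gives $\eta_\varepsilon\to-Bv/|Bv|\in\operatorname{Ran}B$. But $\zeta/|\zeta|\in\operatorname{Ker}B^\top=(\operatorname{Ran}B)^\perp$, so a unit vector would lie in both $\operatorname{Ran}B$ and its orthogonal complement, a contradiction.

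To obtain the parallelism I would try the reachable-set geometry used in Lemma~\ref{wang18}. The common control makes $x_{\zeta,T}$ a boundary point of $\mathcal C(T_\varepsilon)+\overline B_\varepsilon(0)$ and simultaneously of the point-target set at time $T$. Comparing supporting normals via \eqref{wang23} should identify $-e^{A^\top T_\varepsilon}\eta_\varepsilon$ with a positive multiple of $-e^{A^\top T}\zeta$. If that route stalls, the fallback is a direct order-of-vanishing comparison at $s=0$. There $|f_\varepsilon(0)|=O(\tau_\varepsilon^k)$ while $|g_\varepsilon(0)|\to|B^\top Bv|/|Bv|>0$. Matching the Taylor coefficients of the normalized directions in $s$ at successive orders as $\varepsilon\downarrow0$, using $B^\top B=\beta P_W$ to simplify $B^\top B v$, should produce the contradiction.
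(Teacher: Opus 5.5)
Your reduction to $\operatorname{Ker}B^\top=\{0\}$ is correct and coincides with the paper's first step. The contradiction argument, however, has a genuine gap at the step you flag, and it cannot be closed with the information you use.

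Pointwise positive parallelism of $f_\varepsilon$ and $g_\varepsilon$ does not force $e^{A^\top\tau_\varepsilon}\zeta\parallel\eta_\varepsilon$. (If it did, the multiple would be $-c_\varepsilon\eta_\varepsilon$, though that sign is harmless.) The deeper problem is that you use \eqref{wang4} only at the single state $x_{\zeta,T}$, plus Lemma~\ref{wang29}, which is vacuous when $m=1$ (then $W=\mathbb{R}$ and $B^\top B=|B|^2$). Exact truncation can hold at $x_{\zeta,T}$ even though $BB^\top\neq\beta I_n$.

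Take the double integrator $A=\begin{pmatrix}0&1\\0&0\end{pmatrix}$, $B=(0,1)^\top$, $\zeta=(1,0)^\top\in\operatorname{Ker}B^\top$. Then $q_{\zeta,T}(t)=T-t>0$, so $u^*_{0,x_{\zeta,T}}\equiv1$ and $x_{\zeta,T}=(T^2/2,-T)^\top$. Write $u=1-\delta$ with $0\le\delta\le2$. Every state reachable from $x_{\zeta,T}$ at time $t=T-\sigma$ equals $(\sigma^2/2-D_1,\,-\sigma-D_0)^\top$, where $D_0=\int_0^t\delta\,ds\ge0$ and $0\le D_1\le tD_0$. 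Hence $|z|\ge\sigma$, and
\begin{equation*}
\bigl(\tfrac{\sigma^2}{2}-D_1\bigr)^2+(\sigma+D_0)^2-\bigl(\tfrac{\sigma^4}{4}+\sigma^2\bigr)\ \ge\ \sigma D_0(2-\sigma t).
\end{equation*}
So for small $\varepsilon<2/T$, the ball is first reached along the $u\equiv1$ trajectory and only by it; that is, \eqref{wang4} holds at $x_{\zeta,T}$. Yet $e^{A^\top\tau_\varepsilon}\zeta=(1,\tau_\varepsilon)^\top$, while $\eta_\varepsilon=(\tau_\varepsilon^2/2,-\tau_\varepsilon)^\top/\varepsilon\to(0,-1)^\top$, so the two are not parallel.

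Both of your routes fail on this example.
\begin{itemize}
\item The sets $\mathcal C(T)$ and $\mathcal C(T_\varepsilon)+e^{-AT_\varepsilon}\overline{B}_\varepsilon(0)$ (note the factor $e^{-AT_\varepsilon}$, which your version omits) are different sets that merely share the boundary point $x_{\zeta,T}$. Moreover, $\mathcal C(T)$ has a two-dimensional normal cone there: every nonzero $\xi$ with $\xi_2\ge0$ and $\xi_2+T\xi_1\ge0$ gives $u_{\xi,T}\equiv1$. So no comparison of supporting normals singles out $-e^{A^\top T}\zeta$.
\item For $m=1$ the normalized directions are locally constant $\pm1$, so Taylor matching at $s=0$ yields nothing.
\end{itemize}

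The missing idea is to use \eqref{wang4} at a whole family of initial states. Their point-target adjoints should differ by arbitrary multiples of $\zeta$ but share the same terminal control value. The paper takes $\xi_\alpha=\xi_0+\alpha\zeta$ with $B^\top\xi_0=v$ and $|v|=1$.
\begin{itemize}
\item Since $B^\top\xi_\alpha=v$ for every $\alpha$, the argument of Lemma~\ref{wang29} gives the same limit $\eta_{\varepsilon,\alpha}\to\eta=-Bv/|Bv|$ for all $\alpha$.
\item Letting $\varepsilon\downarrow0$ in the formula of Proposition~\ref{prop:basic-min-time}(ii) at a fixed \emph{interior} time $t$, rather than comparing terminal Taylor data, gives $u_\alpha(t)=-q(t)/|q(t)|$ with $q(t)=B^\top e^{A^\top(T-t)}\eta$. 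This is independent of $\alpha$.
\item On the other hand, $u_\alpha(t)$ is the direction of $a_0(t)+\alpha b_0(t)$, where $b_0(t)=B^\top e^{A^\top(T-t)}\zeta$. This direction reverses between $\alpha\to+\infty$ and $\alpha\to-\infty$ unless $b_0(t)=0$.
\item Hence $b_0\equiv0$ on $(0,T)$, contradicting (H2).
\end{itemize}
In the example above, $u_\alpha(t)=\operatorname{sign}\bigl(1+\alpha(T-t)\bigr)$, which equals $-1$ on most of $(0,T)$ for very negative $\alpha$. The family therefore detects the failure that the single state $x_{\zeta,T}$ cannot.
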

\begin{proof}
Suppose, by contradiction, that \eqref{wang50} does not hold.

\smallskip
\noindent{\it Step 1. We prove that}
\begin{equation}\label{wang51}
\mbox{Ker} B^\top\not=\{0\}.
\end{equation}

By contradiction, we suppose that \eqref{wang51} does not hold. Then  $B^\top$ is injective.
Since $B^\top y\in W$ for every
$y\in\mathbb{R}^n$, it follows from \eqref{wang30} that 
$B^\top BB^\top y=\beta B^\top y$. 
This, along with the injectivity of $B^\top$, implies
$BB^\top y=\beta y$ for every $y\in\mathbb{R}^n$,
which leads to a contradiction. Hence, \eqref{wang51} holds.

We now make some preparations for the next step.
Fix
\begin{equation}\label{wang52}
w\in\mbox{Ker} B^\top\setminus\{0\}.
\end{equation}
By \eqref{wang3}, $B\not=0$. Hence, after a normalization, we can choose
$\xi_0\in\mathbb{R}^n$ such that
\begin{equation}\label{wang53}
v:=B^\top\xi_0,
\qquad |v|=1.
\end{equation}
For $\alpha\in\mathbb{R}$, set
$\xi_\alpha:=\xi_0+\alpha w$.
Since $B^\top w=0$, by \eqref{wang53},
\begin{equation}\label{wang54}
B^\top\xi_\alpha=v,
\qquad \alpha\in\mathbb{R}.
\end{equation}
In particular,
\begin{equation}\label{wang55}
\xi_\alpha\ne0,
\qquad \alpha\in\mathbb{R}.
\end{equation}

Fix $T>0$ and set
\begin{equation*}
u_\alpha(t):=u_{0,x_{\xi_\alpha,T}}^*(t),
\qquad 0<t\le T,\quad \alpha\in\mathbb{R}.
\end{equation*}

\smallskip
\noindent{\it Step 2. We prove that}
\begin{equation}\label{wang56}
T_0^*(x_{\xi_\alpha,T})=T,
\qquad \alpha\in\mathbb{R},
\end{equation}
\noindent{\it and}
\begin{equation}\label{wang57}
\lim_{t\uparrow T}u_\alpha(t)=v,
\qquad \alpha\in\mathbb{R}.
\end{equation}



Indeed, by (\ref{wang54}) and the same argument used to prove (\ref{wang31}), with $\xi$ 
replaced by $\xi_\alpha$, we have (\ref{wang56}) and (\ref{wang57}).

\smallskip
\noindent{\it Step 3. Let $\eta_{\varepsilon,\alpha}$ denote the normalized
terminal state of $(P_{\varepsilon,x_{\xi_\alpha,T}})$, and set 
$T_{\varepsilon,\alpha}:=T_\varepsilon^*(x_{\xi_\alpha,T})$. 
We prove that}
\begin{equation}\label{wang59}
\eta_{\varepsilon,\alpha}
\longrightarrow
\eta:=-\frac{Bv}{|Bv|}
\qquad\text{as }\varepsilon\downarrow0,
\end{equation}
\noindent{\it and that}
\begin{equation}\label{wang60}
T_{\varepsilon,\alpha}\longrightarrow T
\qquad\text{as }\varepsilon\downarrow0.
\end{equation}




Indeed, by the same arguments used to prove \eqref{wang34} and \eqref{wang36},  with $\xi$ 
replaced by $\xi_\alpha$, we 
obtain (\ref{wang59}) and (\ref{wang60}).

\smallskip

\noindent{\it Step 4. With $\eta$ given by \eqref{wang59}, set 
$q(t):=B^\top e^{A^\top(T-t)}\eta,\;\;t\in\mathbb{R}$. 
We prove that, for every $t\in(0,T)$ with $q(t)\not=0$,
\begin{equation}\label{wang61}
u_\alpha(t)
=-\frac{q(t)}{|q(t)|},
\qquad \alpha\in\mathbb{R}.
\end{equation}}

By \eqref{wang3} and $\eta\ne0$, $q(\cdot)$ is not identically zero.
Since $q(\cdot)$ is real analytic, it has only finitely many zeros in $[0,T]$.
Fix an arbitrary $t\in(0,T)$ with $q(t)\not=0$ and an arbitrary
$\alpha\in\mathbb{R}$.
By these facts, together with \eqref{wang59} and \eqref{wang60},
we may choose $\varepsilon_0>0$ sufficiently small such that, for
$0<\varepsilon<\varepsilon_0$,
\begin{equation*}
t<T_{\varepsilon,\alpha},\;\;\;\;B^\top e^{A^\top(T_{\varepsilon,\alpha}-t)}
\eta_{\varepsilon,\alpha}\ne0.
\end{equation*}

By Proposition~\ref{prop:basic-min-time}(ii) and the
choice of $\varepsilon_0$, we have
\begin{equation}\label{wang62}
u_{\varepsilon,x_{\xi_\alpha,T}}^*(t)
=
-\frac{B^\top e^{A^\top(T_{\varepsilon,\alpha}-t)}
\eta_{\varepsilon,\alpha}}
{|B^\top e^{A^\top(T_{\varepsilon,\alpha}-t)}
\eta_{\varepsilon,\alpha}|}.
\end{equation}
On the other hand, shrinking $\varepsilon_0$ if necessary, \eqref{wang4} also holds
for $x_{\xi_\alpha,T}$ whenever $0<\varepsilon<\varepsilon_0$. Thus,  by \eqref{wang4},
\begin{equation}\label{wang63}
u_\alpha(t)
=u_{\varepsilon,x_{\xi_\alpha,T}}^*(t).
\end{equation}
Combining \eqref{wang62} and
\eqref{wang63}, we obtain 
\begin{equation*}
u_\alpha(t)
=-\frac{B^\top e^{A^\top(T_{\varepsilon,\alpha}-t)}
\eta_{\varepsilon,\alpha}}
{|B^\top e^{A^\top(T_{\varepsilon,\alpha}-t)}
\eta_{\varepsilon,\alpha}|}.
\end{equation*}
Letting $\varepsilon\downarrow0$ and using \eqref{wang59} and
\eqref{wang60}, we obtain 
$u_\alpha(t)=-q(t)/|q(t)|$. 
Since $t\in(0,T)$ with $q(t)\not=0$ and $\alpha\in\mathbb{R}$ were arbitrary,
this proves \eqref{wang61}.

\smallskip
\noindent{\it Step 5. Let $w$ be given by \eqref{wang52}. We prove that}
\begin{equation}\label{wang64}
B^\top e^{A^\top(T-t)}w=0,
\qquad 0<t<T,
\end{equation}
\noindent{\it and then derive a contradiction.}

Set
\begin{equation*}
a_0(s):=B^\top e^{A^\top(T-s)}\xi_0,
\qquad
b_0(s):=B^\top e^{A^\top(T-s)}w,
\qquad 0<s<T.
\end{equation*}
By \eqref{wang15}, \eqref{wang16}, and \eqref{wang19}, we have
\begin{equation}\label{2026-9-6-1}
u_\alpha(t)
=\frac{B^\top e^{A^\top(T-t)}\xi_\alpha}
{|B^\top e^{A^\top(T-t)}\xi_\alpha|},
\end{equation}
when $t\in(0,T)$ satisfies $B^\top e^{A^\top(T-t)}\xi_\alpha\not=0$.
Fix an arbitrary $t\in(0,T)$ with $q(t)\ne0$. Since
$\xi_\alpha=\xi_0+\alpha w,\;\alpha\in\mathbb{R}$, we have
\begin{equation*}
B^\top e^{A^\top(T-t)}\xi_\alpha
=a_0(t)+\alpha b_0(t),
\qquad \alpha\in\mathbb{R}.
\end{equation*}
Hence, by \eqref{2026-9-6-1} and \eqref{wang61},
\begin{equation}\label{wang65}
\frac{a_0(t)+\alpha b_0(t)}
{|a_0(t)+\alpha b_0(t)|}
=
-\frac{q(t)}{|q(t)|}
\end{equation}
for every $\alpha\in\mathbb{R}$ such that
$a_0(t)+\alpha b_0(t)\not=0$. 

Suppose that $b_0(t)\not=0$. Then
$a_0(t)+\alpha b_0(t)\ne0$ for all sufficiently large $|\alpha|$.
Letting $\alpha\to+\infty$ in
\eqref{wang65}, we obtain
$b_0(t)/|b_0(t)|=-q(t)/|q(t)|$. 
On the other hand, letting $\alpha\to-\infty$ in
\eqref{wang65}, we obtain
$-b_0(t)/|b_0(t)|=-q(t)/|q(t)|$, 
which is a contradiction.
Thus, we have
\begin{equation}\label{wang66}
b_0(t)=0
\qquad
\text{for every }t\in(0,T)\text{ such that }q(t)\not=0.
\end{equation}

Since $q(\cdot)$ has only finitely many zeros in $[0,T]$ and $b_0$ is
continuous, \eqref{wang66} implies
\begin{equation*}
b_0(t)=0,
\qquad 0<t<T.
\end{equation*}
By the definition of $b_0$, this is precisely
\eqref{wang64}. Thus \eqref{wang64} is proved. This 
contradicts the fact that the function $B^\top e^{A^\top(T-\cdot)}w$ has only finitely many 
zeros in $[0,T]$. Thus \eqref{wang50} holds, completing the proof.
\end{proof}

\begin{lemma}\label{wang69}
Assume that \eqref{wang4} holds. Then there exists $a\le0$
such that
\begin{equation}\label{wang70}
A+A^\top=2aI_n.
\end{equation}
\end{lemma}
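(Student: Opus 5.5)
The plan is to repeat the terminal-contact argument of Lemma~\ref{wang49}, now using the identity $BB^\top=\beta I_n$ from Lemma~\ref{wang49}. This identity makes $B^\top$ injective with $|B^\top y|=\sqrt\beta\,|y|$. Compared with Lemma~\ref{wang29}, we compare the two adjoint directions at the moving contact time $T_\varepsilon$ instead of only in the limit $\varepsilon\downarrow0$, and we expand to second order in $\tau_\varepsilon$.

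Fix a unit vector $\xi\in\mathbb{R}^n$ and $T>0$, and put $x:=x_{\xi,T}$. By Lemma~\ref{wang18}, $T_0^*(x)=T$ and
\[
u_{0,x}^*(t)=\frac{e^{A^\top(T-t)}\xi}{|e^{A^\top(T-t)}\xi|}\cdot\frac{B^\top}{\sqrt\beta}\text{-image},
\]
that is, $u_{0,x}^*(t)=B^\top e^{A^\top(T-t)}\xi/\bigl(\sqrt\beta\,|e^{A^\top(T-t)}\xi|\bigr)$. This holds for all $t$, since $e^{A^\top(T-t)}\xi\neq0$.

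For small $\varepsilon$, let $T_\varepsilon$ and $\tau_\varepsilon=T-T_\varepsilon$ be as in Step~2 of Lemma~\ref{wang29}; by \eqref{wang35} and \eqref{wang36} we have $\tau_\varepsilon>0$ and $\tau_\varepsilon\to0$. By \eqref{wang4} and Proposition~\ref{prop:basic-min-time}(ii), for all but finitely many $t\in(0,T_\varepsilon)$,
\[
\frac{B^\top e^{A^\top(T-t)}\xi}{|B^\top e^{A^\top(T-t)}\xi|}
=-\frac{B^\top e^{A^\top(T_\varepsilon-t)}\eta_\varepsilon}{|B^\top e^{A^\top(T_\varepsilon-t)}\eta_\varepsilon|}.
\]
Since $B^\top$ is injective, the vectors $e^{A^\top(T_\varepsilon-t)}\eta_\varepsilon$ and $e^{A^\top(T-t)}\xi$ are negatively parallel. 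Cancelling the invertible factor $e^{A^\top(T_\varepsilon-t)}$ gives $\eta_\varepsilon=-e^{A^\top\tau_\varepsilon}\xi/|e^{A^\top\tau_\varepsilon}\xi|$. Hence the contact state $z_{0,x}^*(T-\tau_\varepsilon)=\varepsilon\eta_\varepsilon$ is a negative multiple of $e^{A^\top\tau_\varepsilon}\xi$.

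On the other hand, the variation-of-constants formula, as in \eqref{wang38}, together with $BB^\top=\beta I_n$, gives
\[
z_{0,x}^*(T-\tau)=-\sqrt\beta\int_0^\tau e^{-A(\tau-s)}\frac{e^{A^\top s}\xi}{|e^{A^\top s}\xi|}\,ds .
\]
Set $c:=\langle A\xi,\xi\rangle$. Using $e^{A^\top s}\xi/|e^{A^\top s}\xi|=\xi+s(A^\top\xi-c\xi)+O(s^2)$ and $e^{-A(\tau-s)}=I-A(\tau-s)+O(\tau^2)$, the integral divided by $\tau$ becomes
\[
\xi+\tfrac{\tau}{2}\bigl(A^\top\xi-A\xi-c\xi\bigr)+O(\tau^2).
\]
This must be parallel to $\xi+\tau A^\top\xi+O(\tau^2)$ along the sequence $\tau=\tau_\varepsilon\to0$. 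We then compare components orthogonal to $\xi$ at order $\tau$. Write $P^\perp$ for the orthogonal projection onto $\xi^\perp$. Two vectors of the form $\xi+\tau p+O(\tau^2)$ and $\xi+\tau r+O(\tau^2)$ that are parallel must satisfy $P^\perp p=P^\perp r$. This yields $\tfrac12P^\perp(A^\top\xi-A\xi)=P^\perp A^\top\xi$, i.e. $P^\perp(A+A^\top)\xi=0$. Thus every unit vector $\xi$ is an eigenvector of the symmetric matrix $A+A^\top$.

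The eigenvalue-independence argument of Step~4 in Lemma~\ref{wang29} then gives $A+A^\top=2aI_n$ for some $a\in\mathbb{R}$. Finally, write $A=aI_n+S$ with $S^\top=-S$. The eigenvalues of $S$ are purely imaginary, so $\sigma(A)\subset a+i\mathbb{R}$, and (H1) forces $a\le0$.

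The main obstacle is the second-order expansion. The first-order terms match automatically; this is exactly why Lemma~\ref{wang29} alone gives no information on $A$. So the $O(\tau)$ corrections, including the normalization term $c\xi$, must be tracked carefully, and one must confirm that the $O(\tau^2)$ remainders are uniform. A second point to check is that only a sequence $\tau_\varepsilon\downarrow0$ is available rather than a full interval of $\tau$. This suffices, because the comparison only uses the coefficients of a Taylor expansion at $\tau=0$.
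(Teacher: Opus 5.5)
Your proposal is correct and follows essentially the same route as the paper. You realize a unit adjoint direction via Lemma~\ref{wang18}, use exact truncation, the ball-target maximum principle and injectivity of $B^\top$ to force the contact direction to equal $-e^{A^\top\tau_\varepsilon}\xi/|e^{A^\top\tau_\varepsilon}\xi|$, expand the point-target trajectory to second order in $\tau$, and match components orthogonal to $\xi$ to get $(I_n-\xi\xi^\top)(A+A^\top)\xi=0$, finishing with the same linear algebra and (H1). The differences are cosmetic: you read off the contact identity at an interior time $t<T_\varepsilon$ by cancelling $e^{A^\top(T_\varepsilon-t)}$ rather than at $t=T_\varepsilon$ via left-continuity, and you compare unnormalized parallel vectors instead of expanding both normalized directions.
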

\begin{proof}
By Lemma~\ref{wang49}, \eqref{wang50} holds. Hence
\begin{equation}\label{wang71}
|B^\top y|=\sqrt{\beta}\,|y|
\qquad\mbox{for every }y\in\mathbb{R}^n,
\end{equation}
which implies that $B^\top$ is injective.

Fix an arbitrary unit vector $p\in\mathbb{R}^n$ and $T>0$. Applying
Lemma~\ref{wang18} with $\xi=p$, we have $T_0^*(x_{p,T})=T$.

\smallskip
\noindent{\it Step 1. Set}
\begin{equation}\label{wang72}
\rho(\tau):=
\frac{e^{A^\top\tau}p}{|e^{A^\top\tau}p|},
\qquad 0\le\tau<T.
\end{equation}
\noindent{\it We prove that}
\begin{equation}\label{wang73}
\rho(\tau)=p+\tau P_pA^\top p+o(\tau)
\qquad\text{as }\tau\downarrow 0,
\end{equation}
\noindent{\it where}
\begin{equation}\label{wang74}
P_p:=I_n-pp^\top.
\end{equation}

Differentiating \eqref{wang72} at $\tau=0$ and using $|p|=1$ and
\eqref{wang74}, we obtain
\begin{equation}\label{wang76}
\rho(0)=p,\qquad\rho'(0)=
A^\top p-\langle A^\top p,p\rangle p
=P_pA^\top p.
\end{equation}
By \eqref{wang76} and the Taylor expansion of $\rho$ at
$\tau=0$, we obtain \eqref{wang73}.

\smallskip
\noindent{\it Step 2. Set}
\begin{equation}\label{wang77}
x(\tau):=z_{0,x_{p,T}}^*(T-\tau),
\qquad 0\le\tau<T.
\end{equation}
\noindent{\it We prove that, for all sufficiently small $\tau>0$, the
normalized direction}
\begin{equation}\label{wang78}
\eta(\tau):=\frac{x(\tau)}{|x(\tau)|}
\end{equation}
\noindent{\it is well defined and satisfies
\begin{equation}\label{wang79}
\eta(\tau)
=-p+\frac{\tau}{2}P_p(A-A^\top)p+o(\tau)
\qquad\text{as }\tau\downarrow 0.
\end{equation}}

According to the optimality of $T$, (\ref{wang78}) is well defined.
By \eqref{wang72}, \eqref{wang15}, \eqref{wang16},
\eqref{wang19}, and \eqref{wang71},
\begin{equation}\label{wang75}
u_{0,x_{p,T}}^*(T-\tau)
=\frac{B^\top\rho(\tau)}{\sqrt{\beta}},
\qquad
Bu_{0,x_{p,T}}^*(T-\tau)
=\sqrt{\beta}\,\rho(\tau),
\qquad 0\leq\tau<T.
\end{equation}
Since $T_0^*(x_{p,T})=T$, we have $x(0)=0$. By
\eqref{wang1} and the second equality in
\eqref{wang75},
\begin{equation}\label{wang80}
x'(\tau)
=-Ax(\tau)-\sqrt{\beta}\,\rho(\tau),
\qquad 0\le\tau<T.
\end{equation}
Since $\rho(\cdot)$ is smooth near $\tau=0$, it follows from
\eqref{wang80}, together with $x(0)=0$ and $\rho(0)=p$, that
$x'(0)=-\sqrt{\beta}\,p$.
Differentiating \eqref{wang80} at $\tau=0$ and using
\eqref{wang76}, we further obtain
\begin{equation*}
x''(0)
=-Ax'(0)-\sqrt{\beta}\,\rho'(0)=\sqrt{\beta}\bigl(Ap-P_pA^\top p\bigr).
\end{equation*}
Thus, by the Taylor expansion of $x$ at $\tau=0$,
\begin{equation}\label{wang81}
x(\tau)
=-\sqrt{\beta}\,\tau p
+\frac{\sqrt{\beta}\,\tau^2}{2}
\bigl(Ap-P_pA^\top p\bigr)
+o(\tau^2)
\qquad\text{as }\tau\downarrow 0.
\end{equation}
For $0<\tau<T$, set
\begin{equation}\label{wang82}
y(\tau):=\frac{x(\tau)}{\sqrt{\beta}\tau}.
\end{equation}
By \eqref{wang81} and \eqref{wang82},
\begin{equation}\label{wang83}
y(\tau)
=
-p+\frac{\tau}{2}\bigl(Ap-P_pA^\top p\bigr)+o(\tau)
\qquad\text{as }\tau\downarrow0.
\end{equation}

Set $d:=Ap-P_pA^\top p\in\mathbb{R}^n$. Then, by \eqref{wang83},
\begin{equation}\label{2026-9-6-2}
y(\tau)
=-p+\frac{\tau}{2}d+o(\tau)
\qquad\text{as }\tau\downarrow0.
\end{equation}
Since $|p|=1$,
\begin{equation*}
|y(\tau)|^2
=1-\tau\langle p,d\rangle+o(\tau)
\qquad\text{as }\tau\downarrow 0.
\end{equation*}
Hence, by the first-order Taylor expansions of $\sqrt{1+s}$ and
$(1+s)^{-1}$ at $s=0$,
\begin{equation}\label{wang84}
|y(\tau)|
=1-\frac{\tau}{2}\langle p,d\rangle+o(\tau),
\qquad
\frac{1}{|y(\tau)|}
=1+\frac{\tau}{2}\langle p,d\rangle+o(\tau)
\qquad\text{as }\tau\downarrow 0.
\end{equation}
By \eqref{wang82} and \eqref{wang78}, 
$\eta(\tau)=y(\tau)/|y(\tau)|$
for all sufficiently small $\tau>0$. Hence, by
\eqref{2026-9-6-2} and \eqref{wang84},
\begin{equation*}
\eta(\tau)
=-p+\frac{\tau}{2}
\bigl(d-\langle p,d\rangle p\bigr)+o(\tau)=-p+\frac{\tau}{2}P_pd+o(\tau),
\end{equation*}
where the last equality follows from \eqref{wang74}.
Substituting $d=Ap-P_pA^\top p$ gives
\begin{equation}\label{wang85}
\eta(\tau)
=-p+\frac{\tau}{2}
P_p\bigl(Ap-P_pA^\top p\bigr)+o(\tau)
\qquad\text{as }\tau\downarrow 0.
\end{equation}
Finally, by \eqref{wang74} and $|p|=1$, we have $P_p^2=P_p$.
Together with \eqref{wang85}, this proves
\eqref{wang79}.

\smallskip
\noindent{\it Step 3. For $\varepsilon>0$, set 
$T_\varepsilon:=T_\varepsilon^*(x_{p,T})$, 
$\tau_\varepsilon:=T-T_\varepsilon$.
 We prove that, for all sufficiently small $\varepsilon>0$,}
\begin{equation}\label{wang86}
\rho(\tau_\varepsilon)=-\eta(\tau_\varepsilon).
\end{equation}

By the same argument used to prove \eqref{wang35} and
\eqref{wang36}, with $\xi$ replaced by $p$, we have
\begin{equation}\label{wang87}
\tau_\varepsilon>0,
\qquad
\tau_\varepsilon\longrightarrow 0
\qquad\text{as }\varepsilon\downarrow 0.
\end{equation}
Hence, for all sufficiently small $\varepsilon>0$, \eqref{wang87} ensures
that $\eta(\tau_\varepsilon)$ is well defined. We now fix an arbitrary such
$\varepsilon>0$.

By \eqref{wang77}, \eqref{wang4}, and the uniqueness of solutions of
\eqref{wang1}, the terminal state of $(P_{\varepsilon,x_{p,T}})$ satisfies
\begin{equation*}
z_{\varepsilon,x_{p,T}}^*(T_\varepsilon)
=z_{0,x_{p,T}}^*(T_\varepsilon)
=x(\tau_\varepsilon).
\end{equation*}
By Proposition~\ref{prop:basic-min-time}(i),
$|x(\tau_\varepsilon)|=\varepsilon$. Hence, by \eqref{wang78},
$x(\tau_\varepsilon)=\varepsilon\eta(\tau_\varepsilon)$. Proposition~\ref{prop:basic-min-time}(ii), together with \eqref{wang71} and
$|\eta(\tau_\varepsilon)|=1$, then gives
\begin{equation}\label{wang88}
u_{\varepsilon,x_{p,T}}^*(T_\varepsilon)
=
-\frac{B^\top\eta(\tau_\varepsilon)}{\sqrt{\beta}}.
\end{equation}
On the other hand, by $T_\varepsilon=T-\tau_\varepsilon$ and
\eqref{wang75},
\begin{equation}\label{wang89}
u_{0,x_{p,T}}^*(T_\varepsilon)
=
\frac{B^\top\rho(\tau_\varepsilon)}{\sqrt{\beta}}.
\end{equation}
By \eqref{wang4}, the left-hand sides of
\eqref{wang88} and \eqref{wang89} are equal. Hence
$B^\top\rho(\tau_\varepsilon)=-B^\top\eta(\tau_\varepsilon)$. 
Since $B^\top$ is injective by \eqref{wang71}, we obtain
\eqref{wang86}. Since $\varepsilon>0$ was arbitrary, \eqref{wang86} holds
for all sufficiently small $\varepsilon>0$.

\smallskip
\noindent{\it Step 4. We prove that}
\begin{equation}\label{wang90}
P_p(A+A^\top)p=0.
\end{equation}

By \eqref{wang73} and \eqref{wang79},
\begin{equation*}
\rho(\tau)+\eta(\tau)
=
\frac{\tau}{2}P_p(A+A^\top)p+o(\tau)
\qquad\text{as }\tau\downarrow 0.
\end{equation*}
By \eqref{wang87}, $\tau_\varepsilon>0$ and
$\tau_\varepsilon\rightarrow 0$ as $\varepsilon\downarrow 0$. Hence, evaluating the above
expansion at $\tau=\tau_\varepsilon$ and using \eqref{wang86}, we obtain
$0=\tau_\varepsilon P_p(A+A^\top)p/2+o(\tau_\varepsilon)$.
Dividing by $\tau_\varepsilon$ and letting $\varepsilon\downarrow 0$, we obtain
\eqref{wang90}.

\smallskip
\noindent{\it Step 5. We prove \eqref{wang70}.}

Since $p\in\mathbb{R}^n$ was an arbitrary unit vector, by
\eqref{wang74} and \eqref{wang90},
$(A+A^\top)p$ is parallel to $p$ for every unit vector $p\in\mathbb{R}^n$.
By the same linear-algebra argument used to prove
\eqref{wang46}, there exists $a\in\mathbb{R}$ such that
$A+A^\top=2aI_n$. 
Writing $A=aI_n+S$, $S^\top=-S$, 
every eigenvalue of $A$ has real part $a$. By \eqref{wang2}, $a\leq 0$.
Therefore \eqref{wang70} holds, completing the proof.
\end{proof}

\begin{lemma}\label{wang91}
Assume \textnormal{(ii)}. Then \textnormal{(iii)} and
\textnormal{(i)} hold. In \textnormal{(iii)}, one may take $\Psi=\Phi$,
where
\begin{equation}\label{wang92}
\Phi(r):=\int_0^r\frac{d\xi}{\sqrt{\beta}-a\xi},
\qquad r\ge0,
\end{equation}
and $a\leq 0$ and $\beta>0$ are given by \eqref{wang6}.
Moreover, for every $x\ne0$ and every $0<\varepsilon<|x|$,
\begin{equation}\label{wang93}
u_{\varepsilon,x}^*(t)=u_{0,x}^*(t),
\qquad 0<t\le T_\varepsilon^*(x).
\end{equation}
\end{lemma}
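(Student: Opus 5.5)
Assume $BB^\top=\beta I_n$ and $A=aI_n+S$ with $S^\top=-S$, $a\le0$. The plan is to compute $V_0$ explicitly, show $V_0=\Phi(|\cdot|)$, and then read off exact truncation from the explicit optimal control.

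\emph{Step 1: the radial dynamics.} For any admissible $u$ and any trajectory $z$ with $z\neq0$, we have
\[
\frac{d}{dt}|z|=\frac{\langle z,Az+Bu\rangle}{|z|}=a|z|+\frac{\langle B^\top z,u\rangle}{|z|}.
\]
The skew part drops out because $\langle z,Sz\rangle=0$. Since $|B^\top z|=\sqrt\beta|z|$, Cauchy--Schwarz gives
\[
\frac{d}{dt}|z|\ge a|z|-\sqrt\beta .
\]
Hence $r(t)=|z(t)|$ satisfies $\dot r\ge -(\sqrt\beta-ar)$, with $\sqrt\beta-ar>0$ for all $r\ge0$. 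Put $\Phi$ as in \eqref{wang92}. Then $\frac{d}{dt}\Phi(|z(t)|)\ge -1$ as long as $z\neq0$, with $\Phi$ absolutely continuous along $z$. Consequently any control reaching $\overline B_\varepsilon(0)$ from $x$ needs time at least $\Phi(|x|)-\Phi(\varepsilon)$.

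\emph{Step 2: the matching control.} Use the feedback $u=K(z)=-B^\top z/(\sqrt\beta|z|)$. This gives $Bu=-\sqrt\beta z/|z|$, so the closed-loop equation is $\dot z=Az-\sqrt\beta z/|z|$. Try the ansatz $z(t)=r(t)e^{St}x/|x|$. Because $e^{St}$ is orthogonal and commutes with $S$, the equation reduces to the scalar ODE $\dot r=ar-\sqrt\beta$ with $r(0)=|x|$. This is solved explicitly, and $r$ reaches any $\varepsilon\in[0,|x|)$ exactly at time $\Phi(|x|)-\Phi(\varepsilon)$. The control along this trajectory is
\[
u(t)=-B^\top e^{St}x/(\sqrt\beta|x|),
\]
which is continuous and has norm $1$ by \eqref{wang71}, so it is admissible.

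This control attains the lower bound of Step 1 for every target radius $\varepsilon$. Therefore $T_\varepsilon^*(x)=\Phi(|x|)-\Phi(\varepsilon)$. Taking $\varepsilon=0$ gives $V_0=\Phi(|\cdot|)$, which is (iii) with $\Psi=\Phi$.

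\emph{Step 3: exact truncation for every $\varepsilon$.} The same control $u$ is optimal for $(P_{0,x})$ and for every $(P_{\varepsilon,x})$ with $0<\varepsilon<|x|$. By the uniqueness in Proposition~\ref{prop:basic-min-time}(i), together with the continuity of $u$ and the left-continuous representative of (iii), we get $u^*_{\varepsilon,x}=u=u^*_{0,x}$ on $(0,T_\varepsilon^*(x)]$. This is \eqref{wang93}, and it implies (i).

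\emph{Main obstacle.} The key point is the differential inequality in Step 1: it must hold for arbitrary measurable admissible controls, and we must be sure the trajectory does not pass through the origin prematurely. To handle this, I would work on the maximal interval where $z\neq0$, on which $|z|$ is absolutely continuous. A first hitting of $0$ (or of the sphere of radius $\varepsilon$) already occurs no earlier than the bound, by integrating $\frac{d}{dt}\Phi(|z|)\ge-1$ up to that time. A secondary check is the ODE computation for the ansatz, using $Se^{St}=e^{St}S$ and $|e^{St}y|=|y|$.
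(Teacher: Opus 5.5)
Your proof is correct and follows the paper's argument essentially step by step. You use the radial inequality $\frac{d}{dt}\Phi(|z|)\ge -1$ up to the first hitting time for the lower bound, and the explicit trajectory $r_*(t)e^{St}x/|x|$ with $r_*'=ar_*-\sqrt{\beta}$, which attains that bound for every $\varepsilon$ at once; uniqueness of the optimal control, with the left-continuous representative, then gives exact truncation for all $0<\varepsilon<|x|$. The only cosmetic points are that the control should be extended by zero after the state reaches the origin (as the paper does with $u_*$), and that $|B^\top y|=\sqrt{\beta}\,|y|$ follows directly from $BB^\top=\beta I_n$ rather than from \eqref{wang71} in another lemma's proof.
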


\begin{proof}
By \eqref{wang6}, we may write
\begin{equation}\label{wang94}
A=aI_n+S,
\qquad
S^\top=-S,\qquad a\leq 0.
\end{equation}

Let $x\not=0$ and $0\leq\varepsilon<|x|$ be arbitrary.

\smallskip
\noindent{\it Step 1. We prove that}
\begin{equation}\label{wang95}
T_\varepsilon^*(x)\geq\Phi(|x|)-\Phi(\varepsilon).
\end{equation}

Let $u$ be admissible for $(P_{\varepsilon,x})$ and set
\begin{equation}\label{wang96}
\tau_\varepsilon(u):=\min\{t>0:z(t;x,u)\in\overline{B}_\varepsilon(0)\}.
\end{equation}
Set
\begin{equation}\label{wang97}
r(t;u):=|z(t;x,u)|,\qquad t\geq 0.
\end{equation}
Then $r(\cdot;u)$ is absolutely continuous and, by
\eqref{wang96},
\begin{equation}\label{wang98}
r(t;u)>\varepsilon,\qquad 0\le t<\tau_\varepsilon(u);
\qquad
r(\tau_\varepsilon(u);u)=\varepsilon.
\end{equation}
Hence we can set
\begin{equation}\label{wang99}
\theta(t;u):=\frac{z(t;x,u)}{|z(t;x,u)|},
\qquad 0\leq t<\tau_\varepsilon(u).
\end{equation}
By \eqref{wang6}, \eqref{wang94},
\eqref{wang97}, and \eqref{wang99}, for a.e.
$t\in(0,\tau_\varepsilon(u))$,
\begin{equation*}
\frac{d}{dt}r(t;u)
=
ar(t;u)+\langle B^\top\theta(t;u),u(t)\rangle
\geq ar(t;u)-\sqrt{\beta}.
\end{equation*}
This implies that if we let $\Phi$ be given by \eqref{wang92}, then
\begin{equation*}
\frac{d}{dt}\Phi(r(t;u))\geq -1
\qquad\mbox{for a.e. }t\in(0,\tau_\varepsilon(u)).
\end{equation*}
Integrating it over $(0,\tau_\varepsilon(u))$ and using \eqref{wang97} and
\eqref{wang98}, we obtain 
$\tau_\varepsilon(u)\geq\Phi(|x|)-\Phi(\varepsilon)$. 
Applying this to $u=u_{\varepsilon,x}^*$ gives
\eqref{wang95}.

\smallskip
\noindent{\it Step 2. We  prove
\begin{equation}\label{wang100}
T_\varepsilon^*(x)=\Phi(|x|)-\Phi(\varepsilon).
\end{equation}}

Set
\begin{equation*}
\theta_0:=\frac{x}{|x|},
\qquad
\theta(t):=e^{St}\theta_0,\qquad t\geq 0,
\end{equation*}
and let $r_*$ solve
\begin{equation}\label{wang101}
r_*'(t)=ar_*(t)-\sqrt{\beta},
\qquad t\geq 0,
\qquad
r_*(0)=|x|.
\end{equation}
Since $a\leq 0$,  we have that $r_*'(t)\le-\sqrt{\beta}$ whenever $r_*(t)>0$.
Hence $r_*(\cdot)$ reaches zero in finite time. Set
\begin{equation}\label{wang102}
\tau_*:=\min\{t>0:r_*(t)=0\}.
\end{equation}
Define
\begin{equation}\label{wang103}
u_*(t):=-\frac{B^\top\theta(t)}{\sqrt{\beta}},
\quad 0\leq t\leq\tau_*;
\qquad
u_*(t)=0,\quad t>\tau_*,
\end{equation}
and
\begin{equation}\label{wang104}
z_*(t):=r_*(t)\theta(t),
\qquad 0\leq t\leq\tau_*.
\end{equation}
By \eqref{wang94}, $|\theta(t)|=1$. Hence, by
\eqref{wang6} and \eqref{wang103},
$u_*\in\mathcal U_{ad}$. Moreover, by \eqref{wang6},
\eqref{wang94}, \eqref{wang101}, and
\eqref{wang104},
\begin{equation*}
z_*'(t)=Az_*(t)+Bu_*(t),
\qquad 0\le t<\tau_*,\qquad z_*(0)=x.
\end{equation*}
Thus, by the uniqueness of solutions of
\eqref{wang1},
\begin{equation}\label{wang105}
z_*(t)=z(t;x,u_*),
\qquad 0\leq t\leq\tau_*.
\end{equation}

Meanwhile, it follows from \eqref{wang92} and \eqref{wang101} that
\begin{equation}\label{wang106}
\Phi(r_*(t))=\Phi(|x|)-t,
\qquad 0\leq t\leq\tau_*.
\end{equation}
By \eqref{wang102} and \eqref{wang106}, 
$\tau_*=\Phi(|x|)$.
Since $\Phi(\cdot)$ is strictly increasing, by \eqref{wang106}, the first
time at which $r_*(\cdot)$ reaches $\varepsilon$ is
\begin{equation}\label{wang107}
t_\varepsilon:=\Phi(|x|)-\Phi(\varepsilon).
\end{equation}
In particular, $0\leq t_\varepsilon\leq\tau_*$. Hence, by
\eqref{wang105},
\begin{equation}\label{wang108}
|z(t_\varepsilon;x,u_*)|=r_*(t_\varepsilon)=\varepsilon.
\end{equation}
Thus $u_*$ is admissible for $(P_{\varepsilon,x})$, and 
$T_\varepsilon^*(x)\leq t_\varepsilon=\Phi(|x|)-\Phi(\varepsilon)$. 
Together with \eqref{wang95}, this proves
\eqref{wang100}.

By \eqref{wang100},
\eqref{wang107}, and $\tau_*=\Phi(|x|)$,
\begin{equation}\label{wang109}
T_\varepsilon^*(x)=t_\varepsilon\leq T_0^*(x)=\tau_*.
\end{equation}

\smallskip
\noindent{\it Step 3. We prove \eqref{wang93}.}

Define
\begin{equation*}
\widetilde u_\varepsilon(t):=
\begin{cases}
u_*(t),&0<t\leq T_\varepsilon^*(x),\\
0,&t>T_\varepsilon^*(x).
\end{cases}
\end{equation*}
Since $u_*\in\mathcal U_{ad}$, we have $\widetilde u_\varepsilon\in\mathcal U_{ad}$.
Moreover, by \eqref{wang108},
\eqref{wang109}, and the uniqueness of solutions of
\eqref{wang1}, $|z(T_\varepsilon^*(x);x,\widetilde u_\varepsilon)|=\varepsilon$. 
Thus $\widetilde u_\varepsilon$ is optimal for $(P_{\varepsilon,x})$. Hence, by
the uniqueness of optimal control, \eqref{wang109}, and
\eqref{wang103},
\begin{equation}\label{wang110}
u_{\varepsilon,x}^*(t)
=-\frac{B^\top\theta(t)}{\sqrt{\beta}},
\qquad
0<t\leq T_\varepsilon^*(x).
\end{equation}

On the other hand, by \eqref{wang102},
\eqref{wang104}, \eqref{wang105}, and
\eqref{wang109}, $z(T_0^*(x);x,u_*)=z_*(\tau_*)=0$.
Thus $u_*$ is optimal for $(P_{0,x})$. Hence, by the uniqueness of optimal control and
\eqref{wang103},
\begin{equation*}
u_{0,x}^*(t)
=-\frac{B^\top\theta(t)}{\sqrt{\beta}},
\qquad
0<t\leq T_0^*(x).
\end{equation*}
Together with \eqref{wang110}, this proves
\eqref{wang93}.

\smallskip
\noindent{\it Step 4. We prove that \eqref{wang7} holds with
$\Psi=\Phi$.}

By \eqref{wang100} with $\varepsilon=0$,
\begin{equation*}
V_0(x)=T_0^*(x)=\Phi(|x|),
\qquad x\not=0.
\end{equation*}
Since $V_0(0)=\Phi(0)=0$, \eqref{wang7} holds with
$\Psi=\Phi$. This completes the proof.
\end{proof}

\begin{lemma}\label{wang111}
Assume \textnormal{(iii)}. Then, for every $T>0$, there exists $R(T)\geq 0$
such that
\begin{equation}\label{wang112}
\mathcal C(T)=\overline{B}_{R(T)}(0).
\end{equation}
\end{lemma}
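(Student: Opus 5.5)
We identify $\mathcal C(T)$ with the sublevel set $\{x: V_0(x)\le T\}$ and then use radiality to conclude that this set is a ball.

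\emph{Step 1: the sublevel set identity.} We show that
\[
\mathcal C(T)=\{x\in\mathbb{R}^n: V_0(x)\le T\},
\]
and use the standard monotonicity of $\mathcal C(\cdot)$. If $0\le t\le T$, any $y\in\mathcal C(t)$ lies in $\mathcal C(T)$: steer $y$ to $0$ at time $t$ with $u$, then set $u=0$ on $(t,T]$, so that the state stays at $0$. Hence $\mathcal C(t)\subset\mathcal C(T)$.

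\emph{Inclusion $\supseteq$.} If $x\ne0$ and $V_0(x)=T_0^*(x)\le T$, then by Proposition~\ref{prop:basic-min-time}(i) the optimal control steers $x$ to $0$ at time $T_0^*(x)$. Thus $x\in\mathcal C(T_0^*(x))\subset\mathcal C(T)$. The case $x=0$ is trivial, taking $u=0$.

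\emph{Inclusion $\subseteq$.} If $x\in\mathcal C(T)$ with $x\ne0$, then $T$ is admissible in the infimum defining $T_0^*(x)$, so $V_0(x)\le T$.

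\emph{Step 2: radiality gives a ball.} By \eqref{wang7}, $\{V_0\le T\}=\{x: \Psi(|x|)\le T\}$, which is a union of spheres centered at the origin. So $\mathcal C(T)$ is invariant under the orthogonal group. We also know $\mathcal C(T)$ is convex (it is the image of a convex set under a linear map), compact (weak-$*$ compactness of the control set, or directly from the support function \eqref{wang23} being finite), and contains $0$.

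Set
\[
R(T):=\max\{|x|: x\in\mathcal C(T)\}.
\]
The maximum is attained by compactness. By rotational invariance, the sphere of radius $R(T)$ lies in $\mathcal C(T)$. By convexity, the convex hull of that sphere, namely $\overline{B}_{R(T)}(0)$, lies in $\mathcal C(T)$. The reverse inclusion holds by the definition of $R(T)$. When $n=1$ the sphere is $\{\pm R\}$, and the same argument works. This proves \eqref{wang112}.

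\emph{Main obstacle.} The only point needing care is Step 1, namely matching the fixed-time controllable set with the sublevel set of $V_0$ through monotonicity and existence of optimal controls. A cleaner alternative for the ball conclusion is to use the support function: $h_T(p)$ depends only on $|p|$ because $\mathcal C(T)$ is rotation invariant, so $h_T(p)=R|p|$, which is the support function of $\overline{B}_R(0)$. Since closed convex sets are determined by their support functions, this also gives \eqref{wang112}.
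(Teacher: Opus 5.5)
Your proposal is correct and takes essentially the same route as the paper: you identify $\mathcal C(T)$ with the sublevel set $\{V_0\le T\}$ by extending controls by zero, then use radiality, compactness and convexity of $\mathcal C(T)$ to squeeze it between $\overline{B}_{R(T)}(0)$ and itself, with $R(T)=\max\{|x|:x\in\mathcal C(T)\}$. One minor caveat: finiteness of the support function gives only boundedness of $\mathcal C(T)$, so the closedness needed for the maximum to be attained should come from the weak-$*$ compactness argument you also cite.
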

\begin{proof}
Recall the controllable set $\mathcal C(T)$ defined in
\eqref{wang21}. If $x\in\mathcal C(T)$, then
$V_0(x)\leq T$. Conversely, if $V_0(x)\leq T$, extending an optimal
control by zero after $T_0^*(x)$ shows that $x\in\mathcal C(T)$.
Therefore
\begin{equation}\label{wang113}
\mathcal C(T)=\{x\in\mathbb{R}^n: V_0(x)\leq T\},
\qquad T>0.
\end{equation}
By \eqref{wang7} and \eqref{wang113}, we have
\begin{equation}\label{2026-9-6-4}
\mathcal C(T)=\{x\in\mathbb{R}^n: \Psi(|x|)\leq T\},
\qquad T>0.
\end{equation}
Since $\mathcal C(T)$ is compact, we can let
$R(T):=\max\{|x|:x\in\mathcal C(T)\}$. 
This, along with (\ref{2026-9-6-4}) and the convexity of $\mathcal C(T)$, implies
$\overline{B}_{R(T)}(0)\subset\mathcal C(T)$, 
while the reverse inclusion follows from the definition of $R(T)$.
Therefore \eqref{wang112} holds.
\end{proof}

\begin{lemma}\label{wang114}
Assume that \eqref{wang112} holds for every $T>0$. Then
\eqref{wang6} holds.
\end{lemma}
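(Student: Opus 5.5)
We will use the support function formula \eqref{wang23}. If $\mathcal C(T)=\overline{B}_{R(T)}(0)$, then $h_T(p)=R(T)|p|$ for every $p\in\mathbb{R}^n$, that is,
\begin{equation*}
\int_0^T |B^\top e^{-A^\top s}p|\,ds = R(T)|p|,\qquad p\in\mathbb{R}^n,\; T>0.
\end{equation*}
By \eqref{wang3} and the analyticity argument used for $q_{\xi,T}$, the left-hand side is positive for $p\ne0$. Hence $R(T)>0$, and $R$ is a function of $T$ alone. We will extract the algebraic conditions by differentiating in $T$. The left side is differentiable in $T$ with derivative $|B^\top e^{-A^\top T}p|$. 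Taking any fixed $p\neq0$ shows that $R$ is differentiable, with $R'(T)=|B^\top e^{-A^\top T}p|/|p|$. Consequently, for every $T>0$,
\begin{equation*}
|B^\top e^{-A^\top T}p| = R'(T)\,|p|,\qquad p\in\mathbb{R}^n.
\end{equation*}

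First, let $T\downarrow0$. By continuity of the left side, $R'(T)$ converges to some $\sqrt{\beta}\ge0$ and $|B^\top p|=\sqrt\beta|p|$ for all $p$. Since $B\ne0$, this forces $\beta>0$ and $BB^\top=\beta I_n$ by polarization. Substituting this back gives $|e^{-A^\top T}p|=\gamma(T)|p|$ with $\gamma(T)=R'(T)/\sqrt\beta$. Thus $e^{-A^\top T}$ is a scalar multiple of an orthogonal matrix for every $T$, and so $e^{-A T}e^{-A^\top T}=\gamma(T)^2 I_n$. Differentiating at $T=0$ yields $-(A+A^\top)=2\gamma(0)\gamma'(0)I_n$, hence $A+A^\top=2aI_n$ for some real $a$. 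To justify the differentiability of $\gamma$ we do not need a separate argument: $\gamma(T)^2=\frac1n\operatorname{tr}(e^{-AT}e^{-A^\top T})$ is smooth. Alternatively, we can compare $\langle e^{-A^\top T}p,e^{-A^\top T}q\rangle$ with $\gamma(T)^2\langle p,q\rangle$ and differentiate the matrix identity directly.

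Finally, the sign condition $a\le0$ follows exactly as in Step 5 of the proof of Lemma~\ref{wang69}. Writing $A=aI_n+S$ with $S^\top=-S$, every eigenvalue of $A$ has real part $a$, so \eqref{wang2} gives $a\le0$. This establishes \eqref{wang6}.

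The main technical point is the differentiation step. We must ensure that $R$ is differentiable and that the pointwise identity for $|B^\top e^{-A^\top T}p|$ holds for every $T$, not just almost every $T$. Both follow because the integrand is continuous in $s$, so the fundamental theorem of calculus applies at every $T$. The rest is linear algebra: a matrix $M$ with $|Mp|=c|p|$ for all $p$ satisfies $M^\top M=c^2I$.
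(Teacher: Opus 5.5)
Your proposal is correct and follows essentially the same route as the paper. Both arguments use the support-function identity $\int_0^T|B^\top e^{-A^\top s}p|\,ds=R(T)|p|$, differentiate it in $T$, let $T\downarrow0$ and apply polarization to obtain $BB^\top=\beta I_n$, differentiate the resulting matrix identity at $T=0$, and get $a\le0$ from (H1). The only difference is cosmetic: the paper differentiates $e^{-AT}BB^\top e^{-A^\top T}=\gamma(T)I_n$ directly, whereas you first substitute $BB^\top=\beta I_n$ and differentiate $e^{-AT}e^{-A^\top T}=\gamma(T)^2I_n$, which is the same identity up to the factor $\beta$.
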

\begin{proof}
We first prove \eqref{wang50}.
Let $h_T(\cdot)$ be the support function defined in
\eqref{wang22}. By \eqref{wang112},
\begin{equation*}
h_T(p)=R(T)|p|,
\qquad p\in\mathbb{R}^n,\quad T>0.
\end{equation*}
Together with \eqref{wang23}, this gives
\begin{equation}\label{wang115}
\int_0^T|B^\top e^{-A^\top s}p|\,ds
=R(T)|p|,
\qquad p\in\mathbb{R}^n,\quad T>0.
\end{equation}
Hence, for any two unit vectors $p,q\in\mathbb{R}^n$,
\begin{equation*}
\int_0^T|B^\top e^{-A^\top s}p|\,ds
=\int_0^T|B^\top e^{-A^\top s}q|\,ds,
\qquad T>0.
\end{equation*}
Since the integrands are continuous, differentiating this equality with
respect to $T$ yields
\begin{equation}\label{wang116}
|B^\top e^{-A^\top T}p|
=|B^\top e^{-A^\top T}q|,
\qquad T>0.
\end{equation}
Thus, by \eqref{wang116}, for each $T>0$ the quadratic form
$p\longmapsto
p^\top e^{-AT}BB^\top e^{-A^\top T}p,\;p\in\mathbb{R}^n$ 
is constant on the unit sphere. To justify the resulting matrix identity,
set $E_T:=e^{-AT}BB^\top e^{-A^\top T}$ and let $\gamma(T)$ be the common
value of $p^\top E_T p$ for $|p|=1$. By homogeneity,
$p^\top E_T p=\gamma(T)|p|^2$ for every $p\in\mathbb{R}^n$. Since $E_T$ is real
symmetric, the polarization identity gives
$p^\top E_T q=\gamma(T)\langle p,q\rangle$ for all $p,q\in\mathbb{R}^n$.
Therefore
\begin{equation}\label{wang117}
e^{-AT}BB^\top e^{-A^\top T}
=\gamma(T)I_n,
\qquad T>0.
\end{equation}
Fix a unit vector $p\in\mathbb{R}^n$. It is clear that
\begin{equation*}
\gamma(T)=|B^\top e^{-A^\top T}p|^2
\longrightarrow |B^\top p|^2
\qquad\text{as }T\downarrow 0.
\end{equation*}
Hence, letting $T\downarrow 0$ in \eqref{wang117}, we obtain
$BB^\top=\beta I_n, \beta:=|B^\top p|^2\geq 0$. 
By \eqref{wang3}, $B\not=0$, and hence $\beta>0$. Therefore
\eqref{wang50} holds.

\smallskip

We next prove \eqref{wang70}. Taking traces in \eqref{wang117}, we obtain
\begin{equation*}
\gamma(T)=\frac{1}{n}
\operatorname{tr}\bigl(e^{-AT}BB^\top e^{-A^\top T}\bigr),
\qquad T>0.
\end{equation*}
The right-hand side defines a smooth extension of $\gamma$ to $T\ge0$,
and with this extension \eqref{wang117} holds also at $T=0$.
Differentiating \eqref{wang117} at $T=0$, we obtain
$-ABB^\top-BB^\top A^\top=\gamma'(0)I_n$.
Using \eqref{wang50}, this gives
$A+A^\top=-\gamma'(0)I_n/\beta$. 
Thus, setting
$a:=-\gamma'(0)/2\beta$, 
we obtain $A+A^\top=2aI_n$. 
Taking traces yields $a=\operatorname{tr}A/n$. 
Writing $A=aI_n+S$ with $S^\top=-S$, every eigenvalue of $A$ has real
part $a$. Hence, by \eqref{wang2}, $a\le0$. Therefore \eqref{wang70} holds. Together with
\eqref{wang50}, this yields \eqref{wang6}.
\end{proof}

\vskip 5pt
Finally, we prove Theorem~\ref{wang5}.
\begin{proof}[Proof of Theorem~\ref{wang5}]
Assume (i). Then \eqref{wang4} holds, and
Lemmas~\ref{wang29}--\ref{wang69} yield
\eqref{wang50} and \eqref{wang70}. Hence
\textnormal{(ii)} holds.

Assume (ii). By Lemma~\ref{wang91},
(i) and (iii) hold, and the ``Moreover'' assertion
of the theorem follows as well.

Finally, assume (iii). Lemma~\ref{wang111}
yields \eqref{wang112}, and Lemma~\ref{wang114} then yields
\eqref{wang6}. Hence (ii) holds. This completes the proof of Theorem~\ref{wang5}.
\end{proof}

\section{Proofs of the corollaries}

\begin{proof}[Proof of Corollary~\ref{wang8}]
If one of (i)--(iii) in
Theorem~\ref{wang5} holds, then \eqref{wang6} holds.
Taking traces in the two identities in \eqref{wang6}, we obtain
$\beta=\operatorname{tr}(BB^\top)/n, 
a=\operatorname{tr}A/n$, 
which is \eqref{wang10}.  By \eqref{wang100} and \eqref{wang92},
\begin{equation*}
T_\varepsilon^*(x)
=\int_\varepsilon^{|x|}
\frac{d\xi}{\sqrt{\beta}-a\xi},
\qquad
0\leq\varepsilon<|x|,
\end{equation*}
which is \eqref{wang9}. This completes the proof.
\end{proof}

\begin{proof}[Proof of Corollary~\ref{wang11}]
We divide the proof into three steps.

\medskip
\noindent{\it Step 1. We prove that $V_0\in C^{0,1}(\mathbb{R}^n)\cap C^\infty(\mathbb{R}^n\setminus\{0\})$
and that $V_0$ satisfies \eqref{wang12}.}

By \eqref{wang9} with $\varepsilon=0$ and \eqref{wang92}, we have
$V_0(x)=\Phi(|x|)$ for $x\not=0$.
By the definition of $V_0$ and \eqref{wang92}, we also have
$V_0(0)=\Phi(0)=0$.
Hence
\begin{equation*}
V_0(x)=\Phi(|x|),
\qquad x\in\mathbb{R}^n.
\end{equation*}
Since $a\leq 0$ by \eqref{wang10}, the above equality implies
that $V_0\in C^{0,1}(\mathbb{R}^n)\cap C^\infty(\mathbb{R}^n\setminus\{0\})$, and that
\begin{equation}\label{wang118}
\nabla V_0(x)
=\frac{x}{|x|(\sqrt{\beta}-a|x|)},\;\;x\not=0.
\end{equation}
Meanwhile, it follows from  \eqref{wang6} that
\begin{equation}\label{wang119}
\langle x,Ax\rangle=a|x|^2,
\qquad
|B^\top x|=\sqrt{\beta}\,|x|.
\end{equation}
Thus, by \eqref{wang118} and \eqref{wang119},
\begin{equation*}
1+\min_{|u|\leq 1}
\langle\nabla V_0(x),Ax+Bu\rangle=1+\frac{a|x|-\sqrt{\beta}}
{\sqrt{\beta}-a|x|}=0.
\end{equation*}
Therefore \eqref{wang12} holds.

\medskip
\noindent{\it Step 2. We prove that}
\begin{equation*}
u_{0,x}^*(t)=K(z_{0,x}^*(t)),
\qquad 0<t<T_0^*(x),
\end{equation*}
\noindent{\it and hence that $K$ is an optimal state feedback for
$(P_{0,x})$.}


Indeed, the assertions in Step 2 follow from the arguments in Step 2 and Step 3 of Lemma~\ref{wang91} directly.

\medskip
\noindent{\it Step 3. We prove that, for every $x\not=0$, $K(x)$ is the
unique minimizer in \eqref{wang14}.}

Fix $x\not=0$. By \eqref{wang118} and
\eqref{wang119}, $B^\top\nabla V_0(x)\not=0$.
Since $\langle\nabla V_0(x),Ax\rangle$ is independent of $v$, the unique
minimizer in \eqref{wang14} is
$-B^\top\nabla V_0(x)/|B^\top\nabla V_0(x)|$.
By \eqref{wang118} and \eqref{wang119}, this is
$-B^\top x/\sqrt{\beta}\,|x|=K(x)$. 
Thus (iii) of
Corollary~\ref{wang11} follows. This completes the proof.
\end{proof}

\begin{remark}[The nonlinear feedback]
{\it
Although the controlled dynamics is linear, the optimal feedback
$K(x)=-B^\top x/\sqrt{\beta}\,|x|$
is nonlinear and $0$-homogeneous: $K(\lambda x)=K(x)$ for every
$\lambda>0$. It is bounded on $\mathbb{R}^n\setminus\{0\}$, with $|K(x)|=1$,
but it does not admit a continuous extension to the origin.
}
\end{remark}

\section{Numerical illustrations}\label{sec:numerical}
\mbox{}\par

\medskip

\noindent The purpose of this section is
  to visualize the geometry behind exact truncation, to
separate the roles of the two rigidity identities in \eqref{wang6}, and to
illustrate a possible perturbative stability phenomenon near the rigid
class. All examples below satisfy (H1)--(H2).

In the numerical illustrations below, we use the truncation of optimal
trajectories to represent the exact truncation property, which was defined
in terms of optimal controls. We therefore first explain why these two
notions are equivalent in the present setting. We use throughout the
unique left-continuous optimal-control representatives fixed in
Proposition~\ref{prop:basic-min-time}.

\begin{definition}[Trajectory exact truncation]\label{wang120}
Let $x\not=0$ and $0<\varepsilon<|x|$. We say that {\it{trajectory exact
truncation}} holds for $(x,\varepsilon)$ if
\begin{equation}\label{wang121}
 z_{\varepsilon,x}^*(t)=z_{0,x}^*(t),
 \qquad 0\leq t\leq T_\varepsilon^*(x).
\end{equation}
\end{definition}

\begin{theorem}[Equivalence of control and trajectory exact truncation]
\label{wang122}
Let $x\not=0$ and $0<\varepsilon<|x|$. Then the following statements are equivalent:
\begin{enumerate}
\item[\textnormal{(i)}]
\begin{equation}\label{wang123}
 u_{\varepsilon,x}^*(t)=u_{0,x}^*(t),
 \qquad 0<t\leq T_\varepsilon^*(x);
\end{equation}
\item[\textnormal{(ii)}] Trajectory exact truncation holds for $(x,\varepsilon)$, namely,
\eqref{wang121} holds.
\end{enumerate}
\end{theorem}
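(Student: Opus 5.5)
The direction (i)$\Rightarrow$(ii) is immediate. Both $z_{\varepsilon,x}^*$ and $z_{0,x}^*$ solve \eqref{wang1} with the same initial state $x$. On $(0,T_\varepsilon^*(x)]$ they are driven by controls that coincide by \eqref{wang123}, and by \eqref{wang109}-type reasoning, or more simply by the monotonicity $T_\varepsilon^*(x)\le T_0^*(x)$, this interval lies inside the domain of both. The variation-of-constants formula
\begin{equation*}
z(t;x,u)=e^{At}x+\int_0^t e^{A(t-s)}Bu(s)\,ds
\end{equation*}
then gives \eqref{wang121} for $0\le t\le T_\varepsilon^*(x)$. Here $T_\varepsilon^*(x)\le T_0^*(x)$ holds because the point-target optimal trajectory is continuous, so it enters $\overline{B}_\varepsilon(0)$ no later than $T_0^*(x)$.

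For (ii)$\Rightarrow$(i), I would differentiate the trajectory identity. On $(0,T_\varepsilon^*(x))$ both trajectories are absolutely continuous, so \eqref{wang121} gives
\begin{equation*}
Bu_{\varepsilon,x}^*(t)=Bu_{0,x}^*(t)\quad\text{for a.e. }t\in(0,T_\varepsilon^*(x)).
\end{equation*}
Since both representatives are piecewise continuous and left-continuous with finitely many discontinuities (Proposition~\ref{prop:basic-min-time}(iii)), this equality of $Bu$ extends to every $t\in(0,T_\varepsilon^*(x)]$ by taking left limits. The difficulty is that $B$ need not be injective, since $m$ may exceed $n$, so equality of $Bu$ alone does not give equality of $u$. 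This is the main obstacle.

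To overcome it, I would use the structure of both optimal controls. By Proposition~\ref{prop:basic-min-time}(ii), $u_{\varepsilon,x}^*(t)$ is a unit vector in $W=\operatorname{Ran}B^\top$ wherever the switching function is nonzero. Likewise $u_{0,x}^*$ is a normalized $B^\top e^{A^\top(T_0^*(x)-t)}\xi$ for some nonzero adjoint $\xi$, by the maximum principle for the point target (cf.\ the form \eqref{wang16} and the bang-bang facts cited in Proposition~\ref{prop:basic-min-time}). So, away from finitely many points, both controls take values in $W$. Since $B$ restricted to $W=(\operatorname{Ker}B)^\perp$ is injective, $Bu_{\varepsilon,x}^*(t)=Bu_{0,x}^*(t)$ with both controls in $W$ forces $u_{\varepsilon,x}^*(t)=u_{0,x}^*(t)$ at all but finitely many $t$. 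Left-continuity of the fixed representatives then upgrades this to every $t\in(0,T_\varepsilon^*(x)]$, which is \eqref{wang123}.

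The step I would verify most carefully is the claim that the point-target optimal control has the normalized form with a nonzero adjoint vector. This is exactly where I would invoke the maximum principle from \cite{WangWangXuZhang}, which Proposition~\ref{prop:basic-min-time} already relies on. An alternative route, if needed, avoids the adjoint entirely. Among all controls $u$ with $|u|\le1$ producing a given value $Bu$, the minimal-norm one lies in $W$, and optimal controls are bang-bang with $|u|=1$. That alone does not force $u$ into $W$, however, so the maximum-principle form is the cleaner path.
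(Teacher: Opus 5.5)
Your proof is correct, and your (i)$\Rightarrow$(ii) direction is the same as the paper's. (Your pointer to \eqref{wang109} is misplaced, since that identity is proved only under the rigidity condition. The continuity argument you give instead is enough.)

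For (ii)$\Rightarrow$(i) you take a longer route. You differentiate \eqref{wang121} to get $Bu_{\varepsilon,x}^*=Bu_{0,x}^*$ a.e. You then handle the non-injectivity of $B$ by placing both controls in $\operatorname{Ran}B^\top=(\operatorname{Ker}B)^\perp$ through their maximum-principle form. For the ball target this form is Proposition~\ref{prop:basic-min-time}(ii). For the point target, however, you need the classical maximum principle (a supporting hyperplane to $\mathcal C(T_0^*(x))$ at $x$). The paper never states this for $\varepsilon=0$, so you are importing an extra ingredient, which is valid but unnecessary.

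The paper avoids both the differentiation and the kernel issue. From \eqref{wang121}, $z_{0,x}^*(T_\varepsilon^*(x))=z_{\varepsilon,x}^*(T_\varepsilon^*(x))\in\overline{B}_\varepsilon(0)$. Hence the restriction of $u_{0,x}^*$ to $(0,T_\varepsilon^*(x)]$, extended by zero, reaches the target at the minimal time and is itself an optimal control for $(P_{\varepsilon,x})$. Uniqueness of the optimal control (Proposition~\ref{prop:basic-min-time}(i)) and of its piecewise continuous, left-continuous representative (Proposition~\ref{prop:basic-min-time}(iii)) then give \eqref{wang123} at once.

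The uniqueness statement, which rests on bang-bang plus strict convexity of the Euclidean control ball, already absorbs the $\operatorname{Ker}B$ ambiguity you identified as the main obstacle. This is also the observation missing from the alternative route you sketched and discarded: you never need to force $u$ into $W$. You only need the truncated point-target control to be admissible and to reach the ball at time $T_\varepsilon^*(x)$.

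What your approach buys is a pointwise structural explanation: both controls are normalized adjoint signals in the subspace where $B$ is injective, and this does not use optimality of the truncated control for the ball problem. What the paper's approach buys is brevity, and it depends only on existence and uniqueness facts already recorded in Proposition~\ref{prop:basic-min-time}.
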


\begin{proof}
Assume (i). Since the two state equations have the same initial
state and the same control on $(0,T_\varepsilon^*(x)]$, uniqueness of solutions of
\eqref{wang1} gives \eqref{wang121}.

Conversely, assume \eqref{wang121}. Then
$z_{0,x}^*(T_\varepsilon^*(x))=z_{\varepsilon,x}^*(T_\varepsilon^*(x))\in\overline{B}_\varepsilon(0)$.
Hence the restriction of $u_{0,x}^*$ to $(0,T_\varepsilon^*(x)]$, extended by zero
after $T_\varepsilon^*(x)$, reaches the ball at the optimal time $T_\varepsilon^*(x)$ and
is therefore an optimal control for $(P_{\varepsilon,x})$. By 
Proposition~\ref{prop:basic-min-time}(i) and (iii),
we have \eqref{wang123}. This completes the proof.
\end{proof}

For a fixed $x\not=0$ and $0<\varepsilon<|x|$, let
\begin{equation}\label{wang124}
\widehat T_\varepsilon(x)
:=
\min\{t>0:|z_{0,x}^*(t)|\leq\varepsilon\}
\end{equation}
be the first time at which $z_{0,x}^*(\cdot)$ enters $\overline{B}_\varepsilon(0)$. Define the {\it{truncation gap}}
\begin{equation}\label{wang125}
 G_\varepsilon(x):=\widehat T_\varepsilon(x)-T_\varepsilon^*(x)\geq 0.
\end{equation}
Indeed, the restriction of $u_{0,x}^*$ to $(0,\widehat T_\varepsilon(x)]$ is
admissible for $(P_{\varepsilon,x})$, so $T_\varepsilon^*(x)\leq\widehat T_\varepsilon(x)$.
Moreover, Theorem~\ref{wang122} gives
\begin{equation*}
 G_\varepsilon(x)=0
 \quad\Longleftrightarrow\quad
 \text{exact truncation holds for the pair $(x,\varepsilon)$}.
\end{equation*}
Thus the trajectory plots below give an equivalent geometric
representation of the exact truncation property studied in the paper.

\medskip
\noindent{\bf 1. The rigid case.}
Take
\begin{equation*}
 A=-\frac12I_2+\begin{pmatrix}0&-2\\2&0\end{pmatrix},
 \qquad B=I_2,
 \qquad x=(1,1)^\top.
\end{equation*}
Then $BB^\top=I_2$ and $A+A^\top=-I_2$. Figure~\ref{fig:num-rigid}
shows the optimal trajectory of $(P_{0,x})$ together with three tolerance
circles. The trajectory rotates while approaching the origin, but, for each
of the three values of $\varepsilon$, the optimal trajectory of $(P_{\varepsilon,x})$
is exactly the corresponding initial segment of the same curve. Thus the
skew-symmetric part of $A$ may rotate the optimal trajectory without
destroying exact truncation.

\begin{figure}[htbp]
\centering
\begin{tikzpicture}
\begin{axis}[
    width=.67\textwidth,
    height=.52\textwidth,
    axis equal image,
    tick label style={font=\scriptsize},
    label style={font=\small},
    xlabel={$z_1$},
    ylabel={$z_2$},
    xmin=-1.15,xmax=1.7,
    ymin=-1.15,ymax=1.15,
    grid=major,
    title={Rigid case: rotation is compatible with exact truncation},
     title style={font=\small},
    legend style={font=\footnotesize,at={(0.03,0.03)},anchor=south west}
]
\addplot[densely dotted,domain=0:360,samples=120,no marks,forget plot] ({cos(x)},{sin(x)});
\addplot[densely dotted,domain=0:360,samples=120,no marks,forget plot] ({0.7*cos(x)},{0.7*sin(x)});
\addplot[densely dotted,domain=0:360,samples=120,no marks,forget plot] ({0.35*cos(x)},{0.35*sin(x)});
\addplot[thick,no marks] coordinates {
(1.000000,1.000000) (0.956988,1.009927) (0.914281,1.018390) (0.871920,1.025415) (0.829949,1.031028) (0.788408,1.035257) (0.747338,1.038131) (0.706777,1.039681) (0.666762,1.039937) (0.627329,1.038931) (0.588512,1.036696) (0.550346,1.033265) (0.512862,1.028674) (0.476091,1.022957) (0.440061,1.016149) (0.404800,1.008288) (0.370336,0.999410) (0.336692,0.989553) (0.303892,0.978755) (0.271958,0.967054) (0.240911,0.954489) (0.210771,0.941100) (0.181554,0.926926) (0.153278,0.912007) (0.125956,0.896382) (0.099603,0.880092) (0.074230,0.863177) (0.049849,0.845678) (0.026468,0.827634) (0.004095,0.809086) (-0.017264,0.790074) (-0.037603,0.770639) (-0.056918,0.750820) (-0.075209,0.730656) (-0.092472,0.710188) (-0.108710,0.689455) (-0.123924,0.668495) (-0.138115,0.647347) (-0.151290,0.626049) (-0.163453,0.604639) (-0.174611,0.583155) (-0.184771,0.561631) (-0.193944,0.540106) (-0.202138,0.518614) (-0.209366,0.497191) (-0.215640,0.475870) (-0.220973,0.454685) (-0.225380,0.433670) (-0.228876,0.412856) (-0.231478,0.392275) (-0.233204,0.371957) (-0.234071,0.351932) (-0.234099,0.332229) (-0.233308,0.312877) (-0.231720,0.293902) (-0.229355,0.275330) (-0.226236,0.257188) (-0.222388,0.239499) (-0.217832,0.222287) (-0.212594,0.205574) (-0.206700,0.189382) (-0.200174,0.173731) (-0.193043,0.158641) (-0.185334,0.144130) (-0.177074,0.130215) (-0.168291,0.116913) (-0.159013,0.104238) (-0.149267,0.092205) (-0.139083,0.080827) (-0.128491,0.070116) (-0.117518,0.060083) (-0.106194,0.050738) (-0.094550,0.042088) (-0.082615,0.034143) (-0.070418,0.026907) (-0.057989,0.020388) (-0.045360,0.014589) (-0.032558,0.009513) (-0.019615,0.005163) (-0.006560,0.001539)
};
\addlegendentry{Point-target optimal trajectory}
\addplot[only marks,mark=*,forget plot] coordinates {(1,1)};
\addplot[only marks,mark=*] coordinates {(0.264860,0.964287)};
\addlegendentry{$\varepsilon=1$ contact}
\addplot[only marks,mark=square*] coordinates {(-0.106948,0.691782)};
\addlegendentry{$\varepsilon=0.7$ contact}
\addplot[only marks,mark=triangle*] coordinates {(-0.227804,0.265716)};
\addlegendentry{$\varepsilon=0.35$ contact}
\node[anchor=west] at (axis cs:1,1) {$x=(1,1)$};
\end{axis}
\end{tikzpicture}

\caption{A rigid system. The marked points are the optimal contacts with
$\partial B_\varepsilon(0)$ for $\varepsilon=1,0.7,0.35$; the corresponding ball-target
optimal trajectories are the initial segments ending at these points.}
\label{fig:num-rigid}
\end{figure}

\medskip
\noindent{\bf 2. Failure of the two isotropy conditions.}
First take
\begin{equation*}
 A=0,
 \qquad B=\operatorname{diag}(1,2),
 \qquad x=(1,1)^\top,
 \qquad \varepsilon=0.5.
\end{equation*}
Here $A+A^\top=0$, whereas $BB^\top=\operatorname{diag}(1,4)$ is not a scalar multiple of $I_2$.
 For $A=0$ and invertible $B$, the ball-target optimal time
is obtained from
\begin{equation*}
 T_\varepsilon^*(x)=\min_{|y|\leq\varepsilon}|B^{-1}(x-y)|.
\end{equation*}
Figure~\ref{fig:num-B} gives
\begin{equation*}
 T_\varepsilon^*(x)=0.672865,
 \qquad
 \widehat T_\varepsilon(x)=0.722749,
 \qquad
 G_\varepsilon(x)=0.049884.
\end{equation*}
Hence the failure of the isotropy condition
$BB^\top=\beta I_2$ alone is enough to destroy exact truncation.

\begin{figure}[htbp]
\centering

\begin{tikzpicture}
\begin{axis}[
    width=.67\textwidth,
    height=.52\textwidth,
    axis equal image,
    tick label style={font=\scriptsize},
    label style={font=\small},
     title style={font=\small},
    xlabel={$z_1$},
    ylabel={$z_2$},
    xmin=-.5,xmax=1.5,
    ymin=-.55,ymax=1.08,
    grid=major,
    title={Anisotropic control action: $G_\varepsilon=0.050$},
    legend style={font=\footnotesize,at={(0.03,0.03)},anchor=south west}
]
\addplot[densely dotted,domain=0:360,samples=120,no marks] ({0.5*cos(x)},{0.5*sin(x)});
\addlegendentry{$|z|=\varepsilon$}
\addplot[dashed,no marks] coordinates {
(1.000000,1.000000) (0.983193,0.983193) (0.966387,0.966387) (0.949580,0.949580) (0.932773,0.932773) (0.915966,0.915966) (0.899160,0.899160) (0.882353,0.882353) (0.865546,0.865546) (0.848739,0.848739) (0.831933,0.831933) (0.815126,0.815126) (0.798319,0.798319) (0.781513,0.781513) (0.764706,0.764706) (0.747899,0.747899) (0.731092,0.731092) (0.714286,0.714286) (0.697479,0.697479) (0.680672,0.680672) (0.663866,0.663866) (0.647059,0.647059) (0.630252,0.630252) (0.613445,0.613445) (0.596639,0.596639) (0.579832,0.579832) (0.563025,0.563025) (0.546218,0.546218) (0.529412,0.529412) (0.512605,0.512605) (0.495798,0.495798) (0.478992,0.478992) (0.462185,0.462185) (0.445378,0.445378) (0.428571,0.428571) (0.411765,0.411765) (0.394958,0.394958) (0.378151,0.378151) (0.361345,0.361345) (0.344538,0.344538) (0.327731,0.327731) (0.310924,0.310924) (0.294118,0.294118) (0.277311,0.277311) (0.260504,0.260504) (0.243697,0.243697) (0.226891,0.226891) (0.210084,0.210084) (0.193277,0.193277) (0.176471,0.176471) (0.159664,0.159664) (0.142857,0.142857) (0.126050,0.126050) (0.109244,0.109244) (0.092437,0.092437) (0.075630,0.075630) (0.058824,0.058824) (0.042017,0.042017) (0.025210,0.025210) (0.008403,0.008403)
};
\addlegendentry{Point-target optimal trajectory}
\addplot[thick,no marks] coordinates {
(1.000000,1.000000) (0.983634,0.983634) (0.967269,0.967269) (0.950903,0.950903) (0.934537,0.934537) (0.918171,0.918171) (0.901806,0.901806) (0.885440,0.885440) (0.869074,0.869074) (0.852708,0.852708) (0.836343,0.836343) (0.819977,0.819977) (0.803611,0.803611) (0.787245,0.787245) (0.770880,0.770880) (0.754514,0.754514) (0.738148,0.738148) (0.721782,0.721782) (0.705417,0.705417) (0.689051,0.689051) (0.672685,0.672685) (0.656320,0.656320) (0.639954,0.639954) (0.623588,0.623588) (0.607222,0.607222) (0.590857,0.590857) (0.574491,0.574491) (0.558125,0.558125) (0.541759,0.541759) (0.525394,0.525394) (0.509028,0.509028) (0.492662,0.492662) (0.476296,0.476296) (0.459931,0.459931) (0.443565,0.443565) (0.427199,0.427199) (0.410833,0.410833) (0.394468,0.394468) (0.378102,0.378102) (0.361736,0.361736)
};
\addlegendentry{Its truncation at first ball hit}
\addplot[very thick,no marks] coordinates {
(1.000000,1.000000) (0.989226,0.983424) (0.978451,0.966848) (0.967677,0.950272) (0.956903,0.933696) (0.946129,0.917120) (0.935354,0.900544) (0.924580,0.883968) (0.913806,0.867392) (0.903031,0.850816) (0.892257,0.834241) (0.881483,0.817665) (0.870708,0.801089) (0.859934,0.784513) (0.849160,0.767937) (0.838386,0.751361) (0.827611,0.734785) (0.816837,0.718209) (0.806063,0.701633) (0.795288,0.685057) (0.784514,0.668481) (0.773740,0.651905) (0.762966,0.635329) (0.752191,0.618753) (0.741417,0.602177) (0.730643,0.585601) (0.719868,0.569025) (0.709094,0.552449) (0.698320,0.535873) (0.687545,0.519298) (0.676771,0.502722) (0.665997,0.486146) (0.655223,0.469570) (0.644448,0.452994) (0.633674,0.436418) (0.622900,0.419842) (0.612125,0.403266) (0.601351,0.386690) (0.590577,0.370114) (0.579803,0.353538) (0.569028,0.336962) (0.558254,0.320386) (0.547480,0.303810) (0.536705,0.287234) (0.525931,0.270658) (0.515157,0.254082) (0.504382,0.237506) (0.493608,0.220930) (0.482834,0.204354) (0.472060,0.187779)
};
\addlegendentry{Ball-target optimal trajectory}
\addplot[only marks,mark=*] coordinates {(1,1)};
\node[anchor=west] at (axis cs:1,1) {$x=(1,1)$};
\end{axis}
\end{tikzpicture}

\caption{Failure caused by $BB^\top\ne\beta I_2$. The point-target optimal
trajectory and the ball-target optimal trajectory meet the same target ball
at different points and different times.}
\label{fig:num-B}
\end{figure}

Next take
\begin{equation*}
 B=I_2,
 \qquad A=\operatorname{diag}(0,-2),
 \qquad x=(1,2)^\top,
 \qquad \varepsilon=1.
\end{equation*}
Now $BB^\top=I_2$, whereas
$A+A^\top=\operatorname{diag}(0,-4)$  is not a scalar multiple of $I_2$.
 The optimal
trajectories are computed by direct shooting of the Pontryagin system,
using the terminal normal to the target ball. Figure~\ref{fig:num-A} gives
\begin{equation*}
 T_\varepsilon^*(x)=0.399113,
 \qquad
 \widehat T_\varepsilon(x)=0.419552,
 \qquad
 G_\varepsilon(x)=0.020439.
\end{equation*}
Thus isotropy of the control action is not sufficient: anisotropy of the
symmetric part of the state dynamics also changes the optimal route to the
tolerance ball.

\begin{figure}[htbp]
\centering

\begin{tikzpicture}
\begin{axis}[
    width=.67\textwidth, 
    height=.56\textwidth, 
    axis equal image,
    tick label style={font=\scriptsize},
    label style={font=\small},
     title style={font=\small},
    xlabel={$z_1$},
    ylabel={$z_2$},
    xmin=-1.1,xmax=2,
    ymin=-1.1,ymax=2.1,
    grid=major,
    title={ Anisotropic state dynamics: $G_\varepsilon=0.020$},
    legend style={font=\footnotesize,at={(0.03,0.03)},anchor=south west}
]
\addplot[densely dotted,domain=0:360,samples=120,no marks] ({cos(x)},{sin(x)});
\addlegendentry{$|z|=\varepsilon$}
\addplot[dashed,no marks] coordinates {
(1.000000,2.000000) (0.987716,1.949631) (0.975437,1.900456) (0.963163,1.852447) (0.950894,1.805573) (0.938630,1.759805) (0.926372,1.715116) (0.914119,1.671478) (0.901873,1.628866) (0.889633,1.587251) (0.877399,1.546611) (0.865173,1.506918) (0.852954,1.468150) (0.840743,1.430282) (0.828540,1.393292) (0.816345,1.357157) (0.804159,1.321854) (0.791983,1.287363) (0.779816,1.253662) (0.767660,1.220732) (0.755514,1.188551) (0.743380,1.157100) (0.731258,1.126360) (0.719148,1.096313) (0.707050,1.066940) (0.694967,1.038224) (0.682898,1.010147) (0.670843,0.982692) (0.658804,0.955842) (0.646782,0.929582) (0.634776,0.903896) (0.622789,0.878767) (0.610820,0.854182) (0.598871,0.830126) (0.586942,0.806583) (0.575034,0.783540) (0.563149,0.760984) (0.551287,0.738901) (0.539449,0.717278) (0.527637,0.696101) (0.515851,0.675360) (0.504093,0.655041) (0.492363,0.635133) (0.480664,0.615625) (0.468997,0.596504) (0.457362,0.577761) (0.445761,0.559383) (0.434195,0.541362) (0.422667,0.523687) (0.411177,0.506348) (0.399726,0.489335) (0.388318,0.472639) (0.376953,0.456251) (0.365632,0.440162) (0.354358,0.424363) (0.343132,0.408846) (0.331957,0.393604) (0.320833,0.378627) (0.309763,0.363909) (0.298749,0.349442) (0.287792,0.335219) (0.276895,0.321233) (0.266059,0.307477) (0.255287,0.293945) (0.244581,0.280631) (0.233942,0.267528) (0.223373,0.254631) (0.212876,0.241934) (0.202453,0.229433) (0.192105,0.217121) (0.181836,0.204994) (0.171647,0.193046) (0.161540,0.181275) (0.151517,0.169675) (0.141581,0.158242) (0.131733,0.146973) (0.121976,0.135863) (0.112310,0.124909) (0.102739,0.114108) (0.093264,0.103457) (0.083887,0.092953) (0.074610,0.082592) (0.065434,0.072372) (0.056361,0.062291) (0.047393,0.052347) (0.038531,0.042537) (0.029776,0.032859) (0.021130,0.023311) (0.012594,0.013892) (0.004170,0.004599)
};
\addlegendentry{Point-target optimal trajectory}
\addplot[thick,no marks] coordinates {
(1.000000,2.000000) (0.991586,1.965370) (0.983174,1.931304) (0.974764,1.897794) (0.966357,1.864830) (0.957952,1.832401) (0.949549,1.800500) (0.941149,1.769116) (0.932751,1.738241) (0.924356,1.707866) (0.915964,1.677982) (0.907574,1.648581) (0.899188,1.619654) (0.890804,1.591193) (0.882424,1.563189) (0.874047,1.535635) (0.865673,1.508523) (0.857303,1.481845) (0.848936,1.455594) (0.840573,1.429761) (0.832213,1.404340) (0.823858,1.379323) (0.815507,1.354703) (0.807160,1.330473) (0.798817,1.306626) (0.790479,1.283156) (0.782145,1.260055) (0.773816,1.237317) (0.765492,1.214935) (0.757173,1.192904) (0.748860,1.171217) (0.740551,1.149867) (0.732249,1.128848) (0.723952,1.108156) (0.715661,1.087783) (0.707377,1.067724) (0.699098,1.047973) (0.690827,1.028525) (0.682562,1.009373) (0.674304,0.990514) (0.666053,0.971941) (0.657810,0.953649) (0.649574,0.935632) (0.641346,0.917887) (0.633127,0.900407) (0.624916,0.883189) (0.616713,0.866226) (0.608520,0.849514) (0.600336,0.833050) (0.592161,0.816827)
};
\addlegendentry{Its truncation at first ball hit}
\addplot[very thick,no marks] coordinates {
(1.000000,2.000000) (0.992759,1.964488) (0.985542,1.929499) (0.978347,1.895023) (0.971177,1.861051) (0.964030,1.827577) (0.956909,1.794591) (0.949814,1.762085) (0.942744,1.730052) (0.935702,1.698483) (0.928686,1.667372) (0.921698,1.636709) (0.914738,1.606489) (0.907808,1.576704) (0.900907,1.547346) (0.894036,1.518409) (0.887195,1.489886) (0.880386,1.461770) (0.873608,1.434054) (0.866863,1.406732) (0.860151,1.379798) (0.853472,1.353245) (0.846827,1.327067) (0.840217,1.301259) (0.833643,1.275813) (0.827104,1.250725) (0.820601,1.225989) (0.814136,1.201599) (0.807708,1.177549) (0.801318,1.153835) (0.794966,1.130451) (0.788654,1.107391) (0.782381,1.084652) (0.776149,1.062227) (0.769957,1.040112) (0.763806,1.018302) (0.757698,0.996792) (0.751631,0.975579) (0.745607,0.954657) (0.739626,0.934021) (0.733689,0.913669) (0.727796,0.893595) (0.721947,0.873795) (0.716143,0.854265) (0.710385,0.835001) (0.704672,0.816000) (0.699006,0.797257) (0.693385,0.778769) (0.687812,0.760532) (0.682285,0.742542)
};
\addlegendentry{Ball-target optimal trajectory}
\addplot[only marks,mark=*] coordinates {(1,2)};
\node[anchor=west] at (axis cs:1,2) {$x=(1,2)$};
\end{axis}
\end{tikzpicture}
\caption{Failure caused by $A+A^\top\not=2a I_2$, while $BB^\top=I_2$ remains
isotropic.}
\label{fig:num-A}
\end{figure}

\noindent{\bf 3. Approximate truncation near the rigid class.}
Theorem~\ref{wang122} shows that exact truncation of the optimal controls
is equivalent to exact truncation of the corresponding optimal
trajectories. This equivalence, however, is an exact one, and we do not
know whether an analogous equivalence holds at the approximate level. In
the final experiment, we therefore restrict attention to approximate
truncation at the trajectory level.

Starting from the rigid system in Figure~\ref{fig:num-rigid}, set
\begin{equation*}
 A_0=-\frac12I_2+\begin{pmatrix}0&-2\\2&0\end{pmatrix},
 \qquad B_0=I_2,
\end{equation*}
and perturb only the symmetric part of $A$ by
\begin{equation*}
 A_\delta=A_0+\delta\begin{pmatrix}1&0\\0&-1\end{pmatrix},
 \qquad B_\delta=B_0,
 \qquad 0\leq\delta\leq 0.2.
\end{equation*}
At $\delta=0$ the system satisfies the rigidity condition
\eqref{wang6}. For every $\delta>0$,
$A_\delta+A_\delta^\top$ is not a scalar multiple of $I_2$.
Hence, by Theorem~\ref{wang5}, universal exact truncation fails for the
perturbed system.

Fix $\varepsilon=0.2$ and consider three initial states of the same Euclidean
length,
\begin{equation*}
 x_1=(1,1)^\top,
 \qquad
 x_2=\sqrt2\begin{pmatrix}
 \cos(\pi/4+0.12)\\ \sin(\pi/4+0.12)
 \end{pmatrix},
 \qquad
 x_3=(-1,1)^\top.
\end{equation*}
Thus $x_1$ and $x_2$ are close, with $|x_1-x_2|\approx 0.170$, while
$|x_1-x_3|=2$ and $|x_2-x_3|\approx 1.876$. For each
$x\in\{x_1,x_2,x_3\}$, define
\begin{equation}\label{wang126}
 D_{\varepsilon,x}(\delta)
 :=\max_{0\leq t\leq T_{\varepsilon,\delta}^*(x)}
 \bigl|z_{\varepsilon,x}^{\delta,*}(t)-z_{0,x}^{\delta,*}(t)\bigr|,
\end{equation}
where the superscript $\delta$ indicates that the optimal trajectories
correspond to the system $(A_\delta,B_\delta)$. The optimal trajectories
of $(P_{0,x})$ and $(P_{\varepsilon,x})$ for the perturbed system are computed
by direct shooting of the Pontryagin system, with continuation in
$\delta$.

By Theorem~\ref{wang122}, $D_{\varepsilon,x}(0)=0$ is precisely the trajectory
representation of exact truncation for the unperturbed system.
Figure~\ref{fig:num-approx} shows that, for all three initial states,
$D_{\varepsilon,x_j}(\delta)$ remains small for small $\delta$ and tends
numerically to zero as $\delta\downarrow0$. The same behavior is observed
for the two nearby initial states $x_1,x_2$ and for the third state $x_3$,
which is far from both of them. Thus the numerical experiment provides
evidence for the trajectory-level approximate truncation phenomenon
suggested in Remark~\ref{wang127}. It is not intended as a proof of such
a stability result.

\begin{figure}[htbp]
\centering
\begin{tikzpicture}
\begin{axis}[
    width=.60\textwidth, 
    height=.41\textwidth, 
    tick label style={font=\small},
    label style={font=\small},
    tick label style={font=\scriptsize},
    label style={font=\small},
     title style={font=\small},
    xlabel={perturbation size $\delta$},
    ylabel={$D_{\varepsilon,x}(\delta)$},
    xmin=0,xmax=.205,
    ymin=0,ymax=.0062,
    xtick={0,0.05,0.10,0.15,0.20},
    grid=major,
    title={ Approximate truncation for three initial states ($\varepsilon=0.20$)},
    legend style={font=\footnotesize,at={(0.03,0.97)},anchor=north west},
    scaled y ticks=false,
    y tick label style={/pgf/number format/fixed,/pgf/number format/precision=3}
]
\addplot[thick,mark=*] coordinates {
(0.00000,0.0000000) (0.01250,0.0003056) (0.02500,0.0006212) (0.03750,0.0009466) (0.05000,0.0012815) (0.06250,0.0016253) (0.07500,0.0019778) (0.08750,0.0023384) (0.10000,0.0027068) (0.11250,0.0030824) (0.12500,0.0034647) (0.13750,0.0038532) (0.15000,0.0042474) (0.16250,0.0046467) (0.17500,0.0050505) (0.18750,0.0054583) (0.20000,0.0058694)
};
\addlegendentry{$x_1$}
\addplot[thick,mark=square*] coordinates {
(0.00000,0.0000000) (0.01250,0.0002458) (0.02500,0.0005037) (0.03750,0.0007735) (0.05000,0.0010552) (0.06250,0.0013484) (0.07500,0.0016531) (0.08750,0.0019689) (0.10000,0.0022956) (0.11250,0.0026330) (0.12500,0.0029806) (0.13750,0.0033381) (0.15000,0.0037052) (0.16250,0.0040813) (0.17500,0.0044662) (0.18750,0.0048592) (0.20000,0.0052600)
};
\addlegendentry{$x_2$}
\addplot[thick,mark=triangle*] coordinates {
(0.00000,0.0000000) (0.01250,0.0002952) (0.02500,0.0005797) (0.03750,0.0008532) (0.05000,0.0011157) (0.06250,0.0013668) (0.07500,0.0016065) (0.08750,0.0018345) (0.10000,0.0020508) (0.11250,0.0022554) (0.12500,0.0024481) (0.13750,0.0026289) (0.15000,0.0027979) (0.16250,0.0029550) (0.17500,0.0031003) (0.18750,0.0032339) (0.20000,0.0033558)
};
\addlegendentry{$x_3$}
\end{axis}
\end{tikzpicture}

\caption{Numerical evidence for trajectory-level approximate truncation.
The quantity $D_{\varepsilon,x}(\delta)$ in \eqref{wang126} is plotted for
$\varepsilon=0.2$ and the three initial states $x_1,x_2,x_3$. At $\delta=0$ exact
truncation gives zero discrepancy; the discrepancy remains small for small
anisotropic perturbations of the rigid system.}
\label{fig:num-approx}
\end{figure}

The first three figures visualize the exact rigidity theorem: the rigid
system exhibits exact truncation, while violation of either isotropy
identity changes the ball-target optimal route. The fourth figure has a
different role: it suggests that, although exact truncation is destroyed by
perturbations away from the rigid class, the corresponding optimal
trajectories may retain a quantitative form of approximate truncation.

\section*{Acknowledgments}
This work was supported by   the New Cornerstone
Science Foundation, the National Natural Science Foundation of China
under grants 12371450, 12671544, the Natural Science Foundation of Hebei Province, China under grant A2026202019
and the Shijiazhuang Science and Technology Bureau
 under grant 241791227A. The authors acknowledge the use of  ChatGPT for assistance in polishing  the English text and preparing the figures of this paper.






\end{document}

%% file: ex_shared.tex
\usepackage{lipsum}
\usepackage{amsfonts}
\usepackage{graphicx}
\usepackage{epstopdf}
\usepackage{algorithmic}
\usepackage{amssymb}
\usepackage{hyperref}
\ifpdf
  \DeclareGraphicsExtensions{.eps,.pdf,.png,.jpg}
\else
  \DeclareGraphicsExtensions{.eps}
\fi

\newsiamremark{remark}{Remark}
\newsiamremark{hypothesis}{Hypothesis}
\crefname{hypothesis}{Hypothesis}{Hypotheses}
\newsiamthm{claim}{Claim}
\newsiamremark{fact}{Fact}
\crefname{fact}{Fact}{Facts}

\headers{Exact Truncation and Radial Rigidity in Time-Optimal Control}{C. Quan, G. Wang, L. Wang, and Q. Yan}

\title{Exact Truncation and Radial Rigidity in Time-Optimal Control}

\author{
Changqin Quan\thanks{Graduate School of System Informatics, Kobe University, Kobe 657-8501, Japan; email:  quanchqin@gold.kobe-u.ac.jp. }
\and Gengsheng Wang
\thanks{Hetao Institute of Mathematics and Interdisciplinary Sciences, Shenzhen 518000, China;
e-mail:  wanggengsheng@himis-sz.cn. }
\and Lijuan Wang
\thanks{School of Mathematics and Statistics, Wuhan University, Wuhan 430072, China;
e-mail: ljwang.math@whu.edu.cn. }
\and Qishu Yan
\thanks{School of Science, Hebei University of Technology, Tianjin  300400, China;
e-mail: yanqishu@whu.edu.cn.}
}
\usepackage{amsopn}
